\documentclass{amsart}
\makeatletter
\@namedef{subjclassname@2020}{%
\textup{2020} Mathematics Subject Classification}
\makeatother
\usepackage{geometry}
\usepackage{amsthm}
\usepackage{setspace}
\usepackage{mathtools}
\usepackage{subcaption}
\usepackage{mathpazo} 
\usepackage[colorlinks,allcolors=blue]{hyperref} 
\usepackage{xpatch} 
\xpatchcmd{\proof} 
{\itshape} 
{\bfseries} 
{}
{}
\newtheorem{theorem}{Theorem}
\newtheorem{proposition}{Proposition}
\newtheorem{lemma}{Lemma}
\newtheorem{corollary}{Corollary}
\newtheorem{definition}{Definition}
\theoremstyle{remark}
\newtheorem{remark}{Remark}

\newcommand{\C}{\mathbb{C}}

\newcommand{\T}{\mathbb{T}}
\newcommand{\B}{\mathbb{B}} 
\newcommand{\D}{\Omega}

\allowdisplaybreaks
\DeclareCaptionLabelFormat{fullfig}{Figure~\thefigure.#2}
\onehalfspace

\title{Convexity of the Berezin range of composition operators induced by automorphisms of the Unit Ball in $\C^{n}$}

\author{Timothy G. Clos}
\author{Trevor C. Pentzien}
\author{Tomas Miguel Rodriguez}

\address{Trevor C. Pentzien, The University of Toledo, Department of 
	Mathematics \& Statistics, 2801 W. Bancroft, Toledo, OH 43606, USA}
\email{trevor.pentzien@rockets.utoledo.edu}

\address{Timothy G. Clos, Kent State University, Department of 
	Mathematical Sciences, 800 E Summit St, Kent, OH 44240, USA}
\email{tclos@kent.edu}

\address{Tomas Miguel Rodriguez, Temple University Japan, Department of 
	Business and Technology, 1-14-29 Taishido, Setagaya-ku, Tokyo 154-0004, Japan}
\email{tomi.rodriguez1@tuj.temple.edu}

\subjclass[2020]{Primary 47B35; Secondary 32A36}
\keywords{Composition operator, convex, Bergman space, Berezin range, unit ball}
\date{\today}
\begin{document}

\begin{abstract}
We study the convexity of the Bergman space Berezin range of composition operators induced by unit disk and unit ball automorphisms.  We obtain several new results characterizing convexity of the Berezin range for linear fractional unit disk and unit ball automorphisms composed with unitary maps. 
\end{abstract}

\maketitle

Let $\Omega$ be a domain in $\mathbb{C}^n$ and assume that $\varphi: \Omega \rightarrow \Omega$ is holomorphic. Then the composition operator with symbol $\varphi$, acting on the space of all holomorphic functions on $\Omega$, is defined by 
\[C_{\varphi}f(z)=f \circ\varphi(z)\]
for $z \in \Omega$. Let $A^{2}(\Omega)$  denote the Bergman space over the domain $\Omega$ in $\C^{n}$ which is the space of all holomorphic functions $f: \Omega \rightarrow \mathbb{C}$ such that 
\[\int_{\Omega}|f(z)|^2dV(z)< \infty\]
where $dV$ denotes the volume measure on $\Omega$.  It is known that point evaluation is a linear and bounded functional on the Bergman space on any bounded domain $\Omega \subset \mathbb{C}^n$ for $n \geq 1$. Therefore, by the Riesz representation theorem, for each fixed $z \in \Omega$ there exists a function $K_z \in A^2(\Omega)$ so that $f(z)=\langle f,K_z \rangle$ for any $f \in A^2(\Omega)$.  The function $K_z$ is called the Bergman kernel. Further we define, 
\[k_z(w)=\dfrac{K_z(w)}{\| K_z\|}=\dfrac{K_z(w)}{\sqrt{K_z(z)}},\]
for $w,z \in \Omega$  as the normalized Bergman kernel. Lastly, we define the Berezin transform of a bounded linear operator $T$ on $A^2(\Omega)$ at $z \in \D$  as the function
\[\widetilde{T}(z)= \langle Tk_z,k_z \rangle.\]
Thus the Berezin range of $T$ is 
\[B(T)=\left\{\langle Tk_z,k_z \rangle: z \in \Omega\right\}.\]

In most of the literature the Berezin transform has been studied from a function theoretic perspective. That is, the Berezin transform is a useful tool for characterizing invertibility and compactness of Toeplitz operators on Bergman spaces.  For a study on compactness of Toeplitz operators via Berezin transforms on the unit disk, see \cite{AxlerZheng}.  For higher dimensional analogs of the results in \cite{AxlerZheng}, see \cite{Coburn73, Eng99, CSZ, DS25}.
The Berezin range of an operator $T$ is a subset of the numerical range of $T$, 
\[W(T)=\left\{\langle Tf,f \rangle: f\in A^2(\Omega)\,,\|f\| =1\right\}.\]
By the Toeplitz-Hausdorff theorem \cite{T1918,H1919}, the numerical range of an operator T is always a convex set. Thus, we are interested in studying the Bergman space Berezin transform from a more geometric perspective.  In particular, we study the convexity of Bergman space Berezin range of a composition operator $B(C_{\varphi})$ with an automorphism symbol $\varphi$ on the unit disk $\mathbb{D}$ in $\mathbb{C}$ and the unit ball $\mathbb{B}_n$ in $\mathbb{C}^n$ for $n\geq 2$.

We define 
\[\mathbb{T}=\{\xi \in \mathbb{C}: |\xi|=1\},\] 
and for $a=(a_1,\ldots,a_n)\in \mathbb{C}^n$, \[\|a\|^2=\sum_{j=1}^n |a_j|^2.\]
In $\C$, a map $\varphi_{a,\xi}:\mathbb{D} \rightarrow \mathbb{D}$ is an automorphism if there are $\xi \in \mathbb{T}$ and $a \in \mathbb{D}$ such that 
\[\varphi_{a,\xi}(z)=\xi\dfrac{a-z}{1-\overline{a}z}.\]
On the other hand for $n > 1$, a map $\varphi_{a,U}: \mathbb{B}_{n} \to \mathbb{B}_{n}$ is an automorphism if there are $a \in \mathbb{B}_{n}$, and unitary map $U$ such that,
\[\varphi_{a,U}(z)= \begin{cases}U\circ\left(\frac{a-P_a(z)-s_aQ_a(z)}{1-\langle z,a\rangle}\right), &a \neq 0 \\ U(z), &a = 0\end{cases},\]
where $s_a=(1-\|a\|^2)^{\frac{1}{2}}$, $P_a(z)=\frac{\langle z,a\rangle}{\|a\|^2}a,$ and $Q_a=I-P_a$. We will use the notation 
\[\varphi_{a,I}(z)=\frac{a-P_a(z)-s_aQ_a(z)}{1-\langle z,a\rangle}\] and 
\[\varphi_{a,-I}(z)=-\left(\frac{a-P_a(z)-s_aQ_a(z)}{1-\langle z,a\rangle}\right).\]

For a composition operator $C_{\varphi_{a,\xi}}: H^2(\mathbb{D})\rightarrow H^2(\mathbb{D})$ with an automorphism symbol $\varphi_{a,-1}(z)=\dfrac{z-a}{1-\overline{a}z}$, \cite{cowen2022convexity} showed that $B(C_{\varphi_{a,-1}})$ is convex if and only if $a=0$.  Here, $H^2(\mathbb{D})$ is the Hardy space of the unit disk.  As alluded to in \cite{cowen2022convexity}, the Bergman space case (as opposed to the Hardy space) is not as well understood because of the added complexity of the Bergman space.  Despite this, \cite{augustine2023composition} proved that for the same class of symbols, as studied in \cite{cowen2022convexity}, the result generalizes to the Bergman space over the disk. See also \cite{AUGUSTINE2026103762} and \cite{AS} for surveys of results.  We studied the convexity of the Bergman space Berezin range in the disk when the automorphism has parameters $a \in \mathbb{D}$, and $\xi = 1$ or $\xi = \pm i$. Further we study the analogous problem in the unit ball in $\C^{n}$, when the automorphism has parameters $a \in \mathbb{B}_{n}$, and $U=I$ or $U=-I$ where $I$ is the identity map.  It is understood that the Berezin range refers to the Bergman space Berezin range.  

\section{Main Results}
The following theorems are the main results.  These results on the disk only consider $\xi \in \{1,i,-i\}$. 
\begin{theorem}\label{MainResultOneDimension}
The Berezin range of $C_{\varphi_{a,1}}$ on $A^2(\mathbb{D})$ is convex if and only if $0\leq|a| \leq \frac{\sqrt{2}}{2}$. 
\end{theorem}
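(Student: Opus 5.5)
We would first compute the Berezin transform of $C_{\varphi_{a,1}}$ explicitly. On $A^2(\mathbb{D})$ we have $K_z(w)=(1-\overline{z}w)^{-2}$ and $k_z(w)=(1-|z|^2)(1-\overline{z}w)^{-2}$, so $\langle f,k_z\rangle=(1-|z|^2)f(z)$. Hence
\[\widetilde{C_{\varphi_{a,1}}}(z)=(1-|z|^2)\,k_z(\varphi_{a,1}(z))=\frac{(1-|z|^2)^2}{(1-\overline{z}\varphi_{a,1}(z))^2}.\]
Since $1-\overline{z}\varphi_{a,1}(z)=\dfrac{1+|z|^2-2\,\mathrm{Re}(\overline{a}z)}{1-\overline{a}z}$, this gives
\[\widetilde{C_{\varphi_{a,1}}}(z)=\bigl(g_a(z)\bigr)^2,\qquad g_a(z)=\frac{(1-|z|^2)(1-\overline{a}z)}{1+|z|^2-2\,\mathrm{Re}(\overline{a}z)}.\]
Writing $a=re^{i\theta}$ and substituting $z=e^{i\theta}w$ leaves the set of values unchanged and replaces $a$ by $r$. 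So we may assume $a=r\in[0,1)$. We then have $g_r(x+iy)=\dfrac{(1-x^2-y^2)(1-rx-iry)}{1+x^2+y^2-2rx}$. This set is symmetric under complex conjugation, it contains $1=g_r(0)$, and it accumulates at $0$ as $|z|\to1$. The case $r=0$ gives $[0,1]$ up to the missing endpoint $0$, which is convex, so from now on we assume $0<r<1$.

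Next we would describe $B(C_{\varphi_{r,1}})$ as a region bounded by an explicit curve. We would fix $\arg$ (equivalently the ratio $\mathrm{Im}/\mathrm{Re}$) of $g_r$, which is determined by $\arg(1-rz)$, and maximize the modulus $|g_r|^2$ along each such ray-family of $z$. Squaring doubles the argument and squares the modulus, so the boundary of the Berezin range becomes a curve $\rho(\psi)e^{i\psi}$ in polar form. We would also argue that along each argument the attained moduli fill an interval down to $0$. This would follow from connectedness, since $g_r\to0$ at the boundary circle, and it shows the range is star-shaped about $0$. Convexity of a star-shaped region with polar boundary $\rho(\psi)$ is then equivalent to the curvature condition $\rho^2+2\rho'^2-\rho\rho''\ge0$ along the boundary. (Equivalently, one can use a Lagrange-multiplier tangency computation in the $(x,y)$ coordinates.) The points $0$ and $1$ need to be checked separately.

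The main obstacle is carrying out this curvature computation cleanly and showing that the sign condition reduces to $1-2r^2\ge0$. We would first test the most sensitive place, namely near the positive real axis at the point $1$, where $z$ is real and small. There a second-order expansion of the boundary should produce a curvature term proportional to $1-2r^2$, which gives the threshold $|a|=\frac{\sqrt2}{2}$. For necessity, when $r>\frac{\sqrt2}{2}$ this expansion gives two conjugate boundary points whose midpoint, a real number, exceeds the maximal real value attained. That maximum is $\max_{x\in(-1,1)}g_r(x)^2$, which we would compute directly, and this exceedance shows non-convexity. For sufficiency, when $r\le\frac{\sqrt2}{2}$, we would verify that the curvature inequality holds globally. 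We expect to do this by reducing it to a polynomial inequality in $r^2$ and one boundary parameter and checking that it is nonnegative.
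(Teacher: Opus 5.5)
Your preliminary steps are right: $\widetilde{C}_{\varphi_{a,1}}=g_a^2$, the rotation to $a=r\in[0,1)$, and the range $(0,1]$ for $a=0$ (the paper's ``$\{1\}$'' there is a slip). Star-shapedness about $0$ is also right, since $\arg\widetilde{C}_{\varphi_{r,1}}(z)=2\arg(1-rz)$ and $|g_r|\to 0$ at both ends of each chord $\{\arg(1-rz)=\phi\}$.

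The gap is where you put the threshold. The point $1=\widetilde{C}_{\varphi_{r,1}}(0)$ is not on the boundary when $r>0$. Indeed $g_r(z)=1+r\bar z+O(|z|^2)$, so $z\mapsto g_r(z)^2$ is a local diffeomorphism at $0$, and $1$ is an interior point. The boundary meets the positive axis at $\max_{x}g_r(x)^2$, and this is the largest modulus in the whole range. To see this, write $1-rz=se^{i\phi}$ and $k=1-r^2$; then
\[
|g_r(z)|=\frac{s(2s\cos\phi-s^2-k)}{s^2-2ks\cos\phi+k}.
\]
Its reciprocal has $\cos\phi$-derivative $-2r^2(s^2+k)/(2s\cos\phi-s^2-k)^2<0$, and the admissible $s$-interval grows with $\cos\phi$. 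So the polar radius $\rho$ has its global maximum at $\psi=0$. Consequently $\rho^2+2\rho'^2-\rho\rho''\ge\rho^2>0$ there for every $r\in(0,1)$, and no factor $1-2r^2$ arises near the positive axis.

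This also means your necessity mechanism cannot occur. For conjugate points $w,\bar w$ of the range, the midpoint is $\mathrm{Re}\,w\le|w|\le\max_x g_r(x)^2$, so it can never exceed the maximal real value.

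The threshold actually lives at the vertex $0$. Because $1-rz$ fills the disc of radius $r$ about $1$, $\arg\widetilde{C}_{\varphi_{r,1}}$ fills $(-2\arcsin r,2\arcsin r)$. So the range has opening angle $4\arcsin r$ at $0$, and this exceeds $\pi$ exactly when $r>\frac{\sqrt2}{2}$. In that case some $z$ has $\arg(1-rz)=\frac{\pi}{4}$, i.e. $\widetilde{C}_{\varphi_{r,1}}(z)=ic$ with $c>0$. Conjugate symmetry puts $-ic$ in the range, and convexity would then force $0$ into the range. That is impossible, since $\widetilde{C}_{\varphi_{r,1}}$ vanishes only on $|z|=1$. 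This is exactly the paper's necessity proof (it finds the zero of the real part by solving a quadratic in $\cos\theta$), and your own star-shaped description gives it immediately.

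For sufficiency your framework is workable, and cleaner than the paper's loop argument. However, the hypothesis $r\le\frac{\sqrt2}{2}$ enters only through the vertex check at $0$ (opening angle at most $\pi$). The curvature inequality on the smooth arcs still has to be proved by an actual computation, and you should not expect it to produce the factor $1-2r^2$. You also need smoothness of $\rho$, for example a unique nondegenerate maximizer on each chord.
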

\begin{theorem}\label{mainresultonedimension3}
The Berezin range of 
$C_{\varphi_{a,i}}$ or $C_{\varphi_{a,-i}}$ on $A^2(\mathbb{D})$ is convex if and only if $a=0$.
\end{theorem}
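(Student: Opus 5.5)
The plan is to compute the Berezin transform of $C_{\varphi_{a,\xi}}$ explicitly and then study the geometry of its image. On $A^2(\mathbb{D})$ we have $K_z(w)=(1-\overline{z}w)^{-2}$, so $k_z(w)=(1-|z|^2)(1-\overline{z}w)^{-2}$. The reproducing property gives $\langle f,k_z\rangle=(1-|z|^2)f(z)$, and therefore
\[
\widetilde{C_{\varphi}}(z)=(1-|z|^2)\,k_z(\varphi(z))=\frac{(1-|z|^2)^2}{(1-\overline{z}\varphi(z))^2}.
\]
Substituting $\varphi_{a,\xi}(z)=\xi\frac{a-z}{1-\overline{a}z}$ yields
\[
\widetilde{C_{\varphi_{a,\xi}}}(z)=\frac{(1-|z|^2)^2(1-\overline{a}z)^2}{\bigl(1-\overline{a}z-\xi a\overline{z}+\xi|z|^2\bigr)^2},
\]
with $\xi=\pm i$. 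The cases $\xi=i$ and $\xi=-i$ should be related by complex conjugation together with the substitution $a\mapsto\overline{a}$, $z\mapsto\overline{z}$. This relation preserves convexity, so it suffices to treat $\xi=i$.

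For the "only if" direction with $a\neq 0$, I would first use the rotation $z\mapsto e^{i\theta}z$ to normalize, say $a=|a|>0$ real. I would then restrict to a convenient one-parameter family of $z$: the real segment $z=t\in(-1,1)$, and also $z$ on the line through $a$. This should produce explicit boundary arcs of $B(C_{\varphi_{a,i}})$. The target is to exhibit two points of the range, for instance the limits as $z\to\pm 1$ along the real axis or points near the fixed points, whose midpoint is not attained. To certify that the midpoint is not attained, I would show it lies outside the region bounded by the image curve, by solving $\widetilde{C}(z)=w_0$ for a specific $w_0$ and showing that no solution has $|z|<1$.

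The "if" direction at $a=0$ gives $\widetilde{C}(z)=\bigl((1-r)/(1+ir)\bigr)^2$ with $r=|z|^2\in[0,1)$, so the range is a single curve from $1$ to $0$. This is where I expect the main obstacle. A direct check, for example at $r=1/2$, gives a non-real value, so the curve is not a segment. That would make the range nonconvex, which contradicts the stated "if" direction. Before proceeding, I would recheck the paper's conventions: the orientation of the inner product in the Berezin transform, and whether $\varphi_{0,\xi}=-\xi z$. The $a=0$ case must genuinely reduce to a segment or a point for the statement to hold. If the conventions are confirmed as written, I would report that the $a=0$ case fails rather than force a proof.
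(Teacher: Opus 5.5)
Your $a=0$ computation is correct. It is a genuine counterexample to the ``if'' direction, not a convention artifact. From the paper's formula, $\varphi_{0,\pm i}(z)=\mp iz$, so Lemma~\ref{BerezinRangeComposition} gives $\widetilde{C}_{\varphi_{0,\pm i}}(z)=\bigl((1-r)/(1\pm ir)\bigr)^2$ with $r=|z|^2$. The range is a $C^1$ arc with empty interior. Its closure contains the non-collinear points $0$, $1$ and $(3\mp 4i)/25$, the last being the value at $r=1/2$. A convex set with empty interior lies in a line, and so does its closure; hence the range is not convex. The ball-style convention $\varphi_{0,\xi}(z)=\xi z$ only replaces the arc by its conjugate. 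The paper's proof never treats this case: it assumes $0<a<1$ from its first lines and establishes only that convexity forces $a=0$. The parallel sentence in the proof of Theorem~\ref{MainResultOneDimension}, ``If $a=0$, then $B(C_{\varphi_{a,1}})=\{1\}$'', tacitly treats $\varphi_{0,\xi}$ as the identity. For $\xi=1$ the true range $(0,1]$ is still convex, but for $\xi=\pm i$ that shortcut fails. So the correct statement is that $B(C_{\varphi_{a,\pm i}})$ is never convex, and you were right not to force the ``if'' direction.

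The gap in your proposal is in the direction that is true, non-convexity for $a\neq 0$. There you describe a search rather than a mechanism. The slices you propose do not deliver it on their own:
\begin{itemize}
\item For real $a$ one has $\varphi_{a,i}(\pm1)=\mp i$, so the limits as $z\to\pm1$ are just $0$.
\item Along $z=t\in(-1,1)$ one gets $\arg\widetilde{C}(t)=-2\arctan\frac{t(t-a)}{1-at}\in(-\pi/2,\pi/2)$, so that whole slice sits in the open right half-plane.
\item You give no way to certify that a chosen midpoint is omitted over all of $\mathbb{D}$.
\end{itemize}
The missing idea, which the paper uses, is that $0\notin B(C_{\varphi_{a,i}})$, because the numerator $(1-|z|^2)^2(1-\overline{a}z)^2$ vanishes only on $|z|=1$. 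So it suffices to find range points on opposite sides of $0$.

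For $a>\sqrt2/2$, take $z=\frac{\sqrt2}{2a}e^{i\pi/4}\in\mathbb{D}$. There $(1-\overline{a}z)^2=-i/2$ and $1-\overline{a}z-ia\overline{z}+i|z|^2=i\bigl(\frac{1}{2a^2}-1\bigr)$, so $\widetilde{C}(z)$ lies on the positive imaginary axis. The intermediate value theorem on the ray $z=ir$ gives a point on the negative imaginary axis.

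For $0<a<\sqrt2/2$, $\varphi_{a,i}$ has no fixed point on $\partial\mathbb{D}$. Hence $\widetilde{C}(z)$ is $(1-|z|^2)^2$ times a function continuous up to $\partial\mathbb{D}$. Its boundary values are positive multiples of $-i(1-\overline{a}z)^2$, whose imaginary part $-\mathrm{Re}(1-\overline{a}z)^2$ is negative. Since $|\widetilde{C}(z)|\ge(1-|z|^2)^2/4$, small values of $\widetilde{C}$ occur only near $\partial\mathbb{D}$. They therefore lie in a closed sector inside the open lower half-plane. But convexity together with $\widetilde{C}(0)=1$ would put all of $[0,1]$ in the closure of the range, and small positive reals are not in that sector.

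The parabolic case $a=\sqrt2/2$ needs a separate argument.
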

See \cite[Theorem 5.7]{augustine2023composition} for the characterization of convexity of the Berezin range for the operator $C_{\varphi_{a,-1}}$ on $A^2(\mathbb{D})$.  This is analogous to the result in \cite[Theorem 4.5]{cowen2022convexity}.  It is similar to our result for $\xi=\pm i$ but different for $\xi=1$.  For $n\geq 2$ we have the following.
\begin{theorem}\label{Zerotonegative}
The Berezin range $B(C_{\varphi_{a,-I}})$ on $A^{2}(\B_{n})$ for $n\geq 2$ is convex if and only if $a = 0.$
\end{theorem}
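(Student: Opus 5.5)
The plan is to obtain an explicit formula for the Berezin transform, reduce to a two-variable problem, and then exhibit a conjugate pair of points in the range whose midpoint is not in the range.

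\textbf{Formula.} On $A^2(\B_n)$ the Bergman kernel is $K_z(w)=(1-\langle w,z\rangle)^{-(n+1)}$ (up to a normalising constant). Since $\langle C_{\varphi}K_z,K_z\rangle=(C_\varphi K_z)(z)=K_z(\varphi(z))$, we get
\[
\widetilde{C_{\varphi}}(z)=\frac{K_z(\varphi(z))}{K_z(z)}=\left(\frac{1-\|z\|^2}{1-\langle \varphi(z),z\rangle}\right)^{n+1}.
\]

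\textbf{The case $a=0$.} Here $\varphi_{0,-I}(z)=-z$, so
\[
\widetilde{C_{\varphi}}(z)=\left(\frac{1-\|z\|^2}{1+\|z\|^2}\right)^{n+1}.
\]
As $\|z\|$ ranges over $[0,1)$ this takes exactly the values in $(0,1]$, which is convex. This gives the ``if'' direction.

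\textbf{Reduction for $a\neq 0$.} For a unitary $V$ we have $k_z\circ V=k_{V^*z}$, and $V^*\circ\varphi_{a,-I}\circ V=\varphi_{V^*a,-I}$. Hence $B(C_{\varphi_{a,-I}})=B(C_{\varphi_{V^*a,-I}})$, and we may assume $a=re_1$ with $0<r<1$. Write $z=(z_1,z')$ and $s=s_a=\sqrt{1-r^2}$. A direct computation gives
\[
\widetilde{C_{\varphi_{a,-I}}}(z)=\left(\frac{(1-\|z\|^2)(1-rz_1)}{1-|z_1|^2-s\|z'\|^2-2ir\,\mathrm{Im}\,z_1}\right)^{n+1}=:F(z)^{n+1}.
\]
This is the formula to be double-checked first; the sign of the imaginary term does not matter for the argument. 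Two features are then immediate. The range depends only on $z_1$ and $t=\|z'\|^2$. Replacing $z_1$ by $\overline{z_1}$ conjugates the value, so the range is symmetric under complex conjugation. Consequently, if the range were convex, then for every value $w$ in the range the real number $\mathrm{Re}\,w$ would also lie in the range.

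\textbf{Non-convexity for $a\neq 0$.} Write $z_1=x+iy$. First I will determine the real part $B(C_\varphi)\cap\mathbb{R}$ of the range. It contains the values at $y=0$, which are positive reals, together with the points where $\arg F(z)\in \frac{\pi}{n+1}\mathbb{Z}$. Next I will use the free parameters $x$, $y$, $t$ to push $\arg F$ close to $\pi/(n+1)$ while keeping $|F|$ bounded away from $0$. Taking $z'=0$ and $z_1=x+iy$ with $x$ near $1/r$-type values (inside the disk), and letting $|z_1|\to1$ along a suitable curve, the ratio $(1-|z_1|^2)/(1-|z_1|^2-2iry)$ has argument tending to $\pm\pi/2$. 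The factor $1-rz_1$ contributes a further rotation. The aim is to produce a value $w$ with $\mathrm{Re}\,w<0$, or more generally with $\mathrm{Re}\,w$ outside the set of attained real values. Then $w$ and $\overline w$ lie in the range but their midpoint $\mathrm{Re}\,w$ does not, contradicting convexity.

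\textbf{Main obstacle.} The hardest step is controlling exactly which real numbers are attained, since real values can come from non-real $z_1$ whenever $\arg F\in\frac{\pi}{n+1}\mathbb{Z}$. My first attempt will be to show that $|F|<1$ whenever $|\arg F|\geq \pi/(n+1)$. That would force every real value attained off the positive axis to have modulus less than $1$, while the positive reals attained lie in $(0,1]$. The task then becomes finding a conjugate pair whose common real part avoids this set: for instance, a negative real part of modulus at least $1$, or, failing that, a negative real part falling in a gap between the moduli of attained negative reals. If that bound fails, I will instead fix $t=0$ and parametrise the one-variable image curve by $z_1$ in the disk. I would then argue directly, via the boundary behaviour as $|z_1|\to1$, that the image of the disk under $F^{n+1}$ is a region that wraps around $0$ without containing a full segment $[w,\overline w]$ for some explicit $w$. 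The additional $t$-direction is handled by noting that $t$ enters only through the real denominator term $-st$ and the numerator factor $1-\|z\|^2$.
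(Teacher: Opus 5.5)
The setup you give is correct: the formula for $F$, the unitary reduction to $a=re_1$, the symmetry $z_1\mapsto\overline{z_1}$, and the case $a=0$. The paper, reading $\varphi_{0,-I}$ as the identity, gets $\{1\}$ instead of your $(0,1]$; either range is convex.

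\textbf{The gap.} The whole ``only if'' half is missing. What you give there is a search plan, not an argument, and it is aimed at the wrong target.
\begin{itemize}
\item A value $w$ with $\mathrm{Re}\,w<0$ gives no contradiction, because negative reals are attained. Along $z=(ic,0)$ the argument of $F$ takes every value in $(0,\arctan(1/r))$, and $\arctan(1/r)>\pi/4$. So for $n\ge 3$ it passes $\pi/(n+1)$, and $F^{n+1}$ hits the negative axis.
\item Your proposed lemma ``$|F|<1$ whenever $|\arg F|\ge\pi/(n+1)$'' can fail for $n\ge3$. Take $n=3$, $r=0.9$, $z'=0$, $z_1=-0.991+0.01i$. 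Then $\arg F\approx\pi/4+0.0003$ and $|F|\approx1.33$. Nudging $z_1$ so that $\arg F=\pi/4$ exactly puts the negative real $F^4\approx-3.1$ in the range.
\item Also, ``$x$ near $1/r$ inside the disk'' is impossible, since $1/r>1$.
\end{itemize}
So the route through $B(C_{\varphi_{a,-I}})\cap\mathbb{R}$ that you describe has no working lemma behind it.

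\textbf{The missing idea.} The real number to exhibit outside the range is $0$. Since $|rz_1|<1$, $F(z)=0$ forces $\|z\|=1$, so $0\notin B(C_{\varphi_{a,-I}})$. By your own symmetry remark, it then suffices to find one nonzero purely imaginary value; you never need to describe the real part of the range. Take $z=(ic,0)$ with $0<c<1$:
\[
F(ic,0)=\frac{(1-c^2)\bigl[(1-c^2+2r^2c^2)+irc(1+c^2)\bigr]}{(1-c^2)^2+4r^2c^2}.
\]
This has positive real and imaginary parts. So $\arg F$ is continuous with values in $(0,\pi/2)$, tends to $0$ as $c\to0$, and tends to $\arctan(1/r)>\pi/4$ as $c\to1^-$. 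Since $(n+1)\pi/4>\pi/2$, the intermediate value theorem gives $c_0$ with $(n+1)\arg F(ic_0,0)=\pi/2$, that is, $\widetilde{C}_{\varphi_{a,-I}}(ic_0,0)=i|F|^{n+1}\neq0$. Its conjugate is attained at $(-ic_0,0)$, so convexity would force $0$ into the range, a contradiction.

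This is exactly the paper's argument, using its path $h(c)=ica/\|a\|$.
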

\begin{theorem}\label{Zerotopositive}
If the Berezin range of $C_{\varphi_{a,I}}$ on $A^{2}(\B_{n})$ for $n\geq 2$ is convex, then \[\|a\|\leq \tan\left(\frac{\pi}{2(n+1)}\right). \]       
\end{theorem}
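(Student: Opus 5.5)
By the unitary invariance of the Bergman kernel $K_z(w)=(1-\langle w,z\rangle)^{-(n+1)}$, and since $C_{\varphi_{Va,I}}=C_{V^*}C_{\varphi_{a,I}}C_{V}$ for unitary $V$, the Berezin range of $C_{\varphi_{a,I}}$ depends only on $r=\|a\|$. So we take $a=re_1$. A direct computation gives the Berezin transform
\[
\widetilde{C_{\varphi_{a,I}}}(z)=\frac{K_z(\varphi_{a,I}(z))}{K_z(z)}=w(z)^{n+1},\qquad w(z)=\frac{1-\|z\|^2}{1-\langle \varphi_{a,I}(z),z\rangle}.
\]
The idea is to exploit two facts. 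First, $w(z)\neq 0$ for every $z\in\B_n$. Second, the Berezin transform tends to $0$ as $\|z\|\to 1$. Together these mean that $0$ lies in the closure of $B(C_{\varphi_{a,I}})$ but not in the set itself, while $1$ is attained, for instance at $z=0$.

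\emph{Geometric step.} Suppose $B(C_{\varphi_{a,I}})$ contains points $p_k\to 0$ with $\arg p_k\to\beta$ and points $q_k\to0$ with $\arg q_k\to-\beta$, where $\beta>\pi/2$. Then for large $k$ the triangle with vertices $p_k$, $q_k$ and $1$ contains $0$ in its interior. Indeed, $p_k$ and $q_k$ lie in the open left half plane on opposite sides of the real axis, and $1$ lies on the positive axis. By convexity $0\in B(C_{\varphi_{a,I}})$, which is a contradiction. Hence convexity forces the set of limiting arguments of $w(z)^{n+1}$ as $\|z\|\to1$ to lie in $[-\pi/2,\pi/2]$. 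Roughly, this means $(n+1)\alpha\le \pi/2$, where $\alpha$ is the supremum of $|\arg w|$ along boundary-approaching curves. The branch of $\arg w$ is chosen continuously; this is legitimate because $\mathrm{Re}\,w>0$ should hold, and that needs to be checked. The symmetry $z\mapsto \bar z$ (coordinatewise) conjugates $w$, so both signs $\pm\alpha$ occur together.

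\emph{Analytic step (main obstacle).} It remains to show $\alpha\ge\arctan r$. The estimate would then read $(n+1)\arctan r\le\pi/2$, i.e.\ $r\le\tan\bigl(\pi/(2(n+1))\bigr)$. Writing $z=\lambda e_1+\mu e_2$, one finds
\[
1-\langle\varphi_{a,I}(z),z\rangle=\frac{1-2r\bar\lambda+|\lambda|^2+s_a|\mu|^2-\dots}{1-r\bar\lambda},
\]
with the exact numerator computed from the formula for $\varphi_{a,I}$. Along the pure $e_1$ slice, $w=(1-|\lambda|^2)(1-r\bar\lambda)/(1-2r\bar\lambda+|\lambda|^2)$, and its argument tends to $0$ at the boundary to first order. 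So the radial limit is not enough. Instead, I would let $z$ tend to the boundary tangentially: take $|\lambda|^2+|\mu|^2\to1$ with $\lambda\to 0$ and $|\mu|\to1$, choosing $\lambda=it$ with $t$ tuned against $1-\|z\|^2$. Then $1-\langle \varphi_{a,I}(z),z\rangle\approx (1+s_a)\cdot(\text{small}) \pm 2irt$-type terms. By balancing $t$ against the small real quantity, the argument of $w$ should approach $\arctan r$ while $|w|\to0$. Carrying out this expansion carefully, and confirming that the resulting limiting argument is at least $\arctan r$, is the step I expect to require the most work. If the tangential choice yields a larger angle, that only strengthens the conclusion, since it still implies the stated bound.
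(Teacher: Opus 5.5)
There is a genuine gap in your analytic step, and it comes from a miscomputed formula. Your reduction to $a=re_1$, the identity $\widetilde{C}_{\varphi_{a,I}}=w^{n+1}$, the conjugation symmetry, and the geometric mechanism (points of the range in the closed left half-plane, together with conjugation and convexity, force $0$ into the range) are all sound.

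\textbf{The miscomputation.} On the slice $z=\lambda e_1$ one has $\varphi_{a,I}(\lambda e_1)=\frac{r-\lambda}{1-r\lambda}e_1$, so
\[
w(\lambda e_1)=\frac{(1-|\lambda|^2)(1-r\lambda)}{1-2r\,\mathrm{Re}\,\lambda+|\lambda|^2}.
\]
The denominator is real and positive; it is not $1-2r\bar\lambda+|\lambda|^2$. More generally, for every $z\in\B_n$,
\[
(1-\langle z,a\rangle)\bigl(1-\langle\varphi_{a,I}(z),z\rangle\bigr)=1-2r\,\mathrm{Re}\,z_1+|z_1|^2+s_a\bigl(\|z\|^2-|z_1|^2\bigr),
\]
and this is real and positive (compare Theorem \ref{Th: Blaschke type symbol}). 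Hence $\arg w(z)=\arg(1-rz_1)$ exactly.

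\textbf{Consequences for your plan.} Your tangential route cannot work. With $z_1=it\to0$ you get $\arg w=-\arctan(rt)\to 0$. Along that path $1-\langle\varphi_{a,I}(z),z\rangle=(1+t^2+s_a|\mu|^2)/(1-irt)\approx 1+s_a$, so there is no small real quantity to balance $t$ against. Conversely, the radial slice you discarded is exactly the one that works: for $\lambda=ix$ one gets $\arg w=-\arctan(rx)\to-\arctan r$ as $x\to1^-$.

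\textbf{The paper's argument.} Once you see this, no boundary limits are needed. If $r>\tan(\pi/(2(n+1)))$, the intermediate value theorem gives $x_0\in(0,1)$ with $(n+1)\arctan(rx_0)=\pi/2$. Then $\widetilde{C}_{\varphi_{a,I}}(ix_0e_1)$ is a nonzero purely imaginary number, and its conjugate also lies in the range. Convexity then puts $0$ in the range, which is impossible because the transform never vanishes in $\B_n$.

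\textbf{A sharper bound.} The set $\{1-rz_1:|z_1|<1\}$ is the disk of radius $r$ about $1$. Taking $z_1=xe^{i\theta}$ along a tangent direction therefore gives $|\arg w|\to\arcsin r$. The same argument then yields the stronger necessary condition $\|a\|\le\sin(\pi/(2(n+1)))$, which for $n=1$ matches the $\sqrt{2}/2$ of Theorem \ref{MainResultOneDimension}.
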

Notice Theorem \ref{Zerotopositive} implies there exists $a\in \mathbb{B}_n$ for $n\geq 2$ so that the Berezin range of $C_{\varphi_{a,I}}$ is not convex. By Theorem \ref{MainResultOneDimension}, this is also true for $n=1$.
\section{\texorpdfstring{The Unit Disk in $\mathbb{C}$}{The Unit Disk in ℂ}}
We first compute the Berezin transform for a general symbol $\varphi:\Omega \to \Omega$. We do this for the Bergman space, but the computation is identical for any reproducing kernel Hilbert space.  We note that $K_z(z)>0$ for any $z\in \Omega$.
\begin{lemma} \label{BerezinRangeComposition}
Let $\Omega$ be a domain and let $K$ be the kernel for the Bergman space over $\Omega$. Furthermore, let $C_{\varphi}:A^2(\Omega) \rightarrow A^2(\Omega)$ be the composition operator with symbol $\varphi$.
Then for $z \in \Omega$,
\begin{equation*}
    \widetilde{C}_{\varphi}(z) = \frac{C_{\varphi}K_{z}(z)}{K_{z}(z)}.
\end{equation*}
\begin{proof}
Let $z \in \Omega$, then  
\begin{align*}
    \widetilde{C}_{\varphi}(z) &= \langle C_{\varphi}k_z, k_{z} \rangle \\
                        &= \left\langle C_{\varphi}\frac{K_{z}}{\|K_{z}\|}, \frac{K_{z}}{\|K_{z}\|} \right\rangle \\
                        &= \left\langle \frac{K_{z} \circ \varphi}{\|K_{z}\|}, \frac{K_{z}}{\|K_{z}\|} \right\rangle \\
                        &= \frac{1}{\|K_{z}\|^{2}}\left\langle K_{z} \circ \varphi, K_{z} \right\rangle \\
                        &= \frac{1}{K_{z}(z)} K_{z}(\varphi(z)) \\
                        &= \frac{(C_{\varphi}K_{z})(z)}{K_{z}(z)}.
\end{align*}
\end{proof}
\end{lemma}
\noindent
We can use Lemma \ref{BerezinRangeComposition} to find a formula for the Berezin transform of a composition operator in the Bergman space over the unit disk with symbol $\varphi_{a,\xi}(z)=\xi\dfrac{a-z}{1-\overline{a}z}$.
\begin{lemma}\label{ReImBerezinTransform}
The real and imaginary parts of $\widetilde{C}_{\varphi_{a,\xi}}$ are of the form,
\[\mathrm{Re}(\widetilde{C}_{\varphi_{a,\xi}})(z)=\left[\dfrac{1-|z|^2}{A(z)^2+B(z)^2}\right]^2\left(C(z)[A(z)^2-B(z)^2]+4A(z)B(z)D(z)\right),\]
\[\mathrm{Im}(\widetilde{C}_{\varphi_{a,\xi}})(z)=2 \left[\dfrac{1-|z|^2}{A(z)^2+B(z)^2}\right]^2(D(z)[A(z)^2-B(z)^2]-A(z)B(z)C(z)),\]
where
\[A(z)= -\mathrm{Re}(\overline{a}z)(1+\mathrm{Re}(\xi))+ 1 + \mathrm{Re}(\xi)|z|^2-\mathrm{Im}(\xi)\mathrm{Im}(\overline{a}z),\]
\[B(z)=-\mathrm{Im}(\overline{a}z)(1-\mathrm{Re}(\xi))+\mathrm{Im}(\xi)|z|^2-\mathrm{Im}(\xi)\mathrm{Re}(\overline{a}z),\]
\[C(z)=(1-\mathrm{Re}(\overline{a}z))^2-\mathrm{Im}(\overline{a}z)^2,\]
\[D(z)=-(1-\mathrm{Re}(\overline{a}z))\mathrm{Im}(\overline{a}z).\]
\end{lemma}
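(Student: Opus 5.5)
The plan is to apply Lemma \ref{BerezinRangeComposition} with the explicit Bergman kernel of the disk, $K_z(w)=(1-\overline{z}w)^{-2}$. This gives
\[
\widetilde{C}_{\varphi_{a,\xi}}(z)=\frac{K_z(\varphi_{a,\xi}(z))}{K_z(z)}=\frac{(1-|z|^2)^2}{\bigl(1-\overline{z}\varphi_{a,\xi}(z)\bigr)^2}.
\]
The remaining work is to put this over a common denominator, rationalize it, and separate real and imaginary parts.

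First, compute
\[
1-\overline{z}\varphi_{a,\xi}(z)=\frac{1-\overline{a}z-\xi a\overline{z}+\xi|z|^2}{1-\overline{a}z}.
\]
Write $w=\overline{a}z=u+iv$ and $\xi=p+iq$, and note that $a\overline{z}=\overline{w}=u-iv$. Call the numerator $N$. Expanding gives
\[
\mathrm{Re}\,N=1-u-pu-qv+p|z|^2,\qquad \mathrm{Im}\,N=-v+pv-qu+q|z|^2 .
\]
These are exactly $A(z)$ and $B(z)$ as stated, once we substitute $u=\mathrm{Re}(\overline{a}z)$, $v=\mathrm{Im}(\overline{a}z)$, $p=\mathrm{Re}(\xi)$ and $q=\mathrm{Im}(\xi)$. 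Hence
\[
\widetilde{C}_{\varphi_{a,\xi}}(z)=(1-|z|^2)^2\,\frac{(1-w)^2}{N^2}.
\]

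Next, expand $(1-w)^2=(1-u)^2-v^2-2i(1-u)v=C(z)+2iD(z)$. Rationalize using
\[
\frac{1}{N^2}=\frac{\overline{N}^2}{|N|^4}=\frac{A^2-B^2-2iAB}{(A^2+B^2)^2}.
\]
Multiplying $(C+2iD)(A^2-B^2-2iAB)$ gives real part $C(A^2-B^2)+4ABD$ and imaginary part $2D(A^2-B^2)-2ABC$. Multiplying by the positive real factor $(1-|z|^2)^2/(A^2+B^2)^2$ then yields both displayed formulas.

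There is no conceptual obstacle here. The only step requiring care is the bookkeeping in the expansion of $N$, in particular tracking the conjugate $a\overline{z}=\overline{w}$ multiplied by $\xi$, so that the signs of the $qv$ and $qu$ terms come out right. We also note that $N\neq 0$ on $\mathbb{D}$, since $|\overline{z}\varphi_{a,\xi}(z)|<1$ and $1-\overline{a}z\neq 0$. Consequently $A^2+B^2>0$ and the expressions are well defined.
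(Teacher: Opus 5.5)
Your proposal is correct and takes essentially the same route as the paper. Both apply Lemma \ref{BerezinRangeComposition} with the disk kernel and clear the inner denominator to get $(1-|z|^2)(1-\overline{a}z)/(A+iB)$, then rationalize with $(A-iB)^2$ and expand $(C+2iD)(A^2-B^2-2iAB)$. Your explicit check that $\mathrm{Re}\,N=A$, $\mathrm{Im}\,N=B$, and your observation that $N\neq 0$ on $\mathbb{D}$, fill in steps the paper leaves implicit.
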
    
\begin{proof}
Let $\xi \in \mathbb{T}$. Then, by Lemma \ref{BerezinRangeComposition}

\begin{align*}
\widetilde{C}_{\varphi_{a,\xi}}(z) =& \left[\dfrac{1-|z|^2}{1-\xi(\frac{a-z}{1-\overline{a}z})\overline{z}}\right]^2 \\
=& \left[\dfrac{(1-|z|^2)(1-\overline{a}z)}{1-\overline{a}z-\xi a\overline{z}+\xi|z|^2}\right]^2 \\
=& \left[\dfrac{(1-|z|^2)(1-\overline{a}z)}{A(z)+iB(z)}\right]^2 \\
=& \left[\dfrac{(1-|z|^2)(1-\overline{a}z)(A(z)-iB(z))}{A(z)^2+B(z)^2}\right]^2 \\
=& \left[\dfrac{1-|z|^2}{A(z)^2+B(z)^2}\right]^2(1-\overline{a}z)^2(A(z)-iB(z))^2 \\
=& \left[\dfrac{1-|z|^2}{A(z)^2+B(z)^2}\right]^2 (\mathrm{Re}(1-\overline{a}z)+i\mathrm{Im}(1-\overline{a}z))^2(A(z)-iB(z))^2 \\
=& \left[\dfrac{1-|z|^2}{A(z)^2+B(z)^2}\right]^2(\mathrm{Re}(1-\overline{a}z)^2-\mathrm{Im}(1-\overline{a}z)^2+2i\mathrm{Re}(1-\overline{a}z)\mathrm{Im}(1-\overline{a}z))(A(z)^2-B(z)^2-2iA(z)B(z)) \\
=& \left[\dfrac{1-|z|^2}{A(z)^2+B(z)^2}\right]^2(C(z)+2iD(z))(A(z)^2-B(z)^2-2iA(z)B(z)) \\
=& \left[\dfrac{1-|z|^2}{A(z)^2+B(z)^2}\right]^2[(C(z)[A(z)^2-B(z)^2]+4A(z)B(z)D(z)) + i(2D(z)[A(z)^2-B(z)^2]-2A(z)B(z)C(z))] \\
\end{align*}
\end{proof}
Now that we have found the form for $\widetilde{C}_{\varphi_{a,\xi}}$ in the Bergman space over the unit disk, we can use a computer software to get an insight for some of the properties of the Berezin range for different values of $a \in \mathbb{D}$ and $\xi \in \mathbb{T}$. In the following figures the colored bar on the right side of the figures shows the distance from the origin for the input value $z \in \mathbb{D}$.
\begin{figure}[htbp]
    \centering

    \begin{subfigure}{0.40\textwidth}
        \centering
    \includegraphics[width=\linewidth]{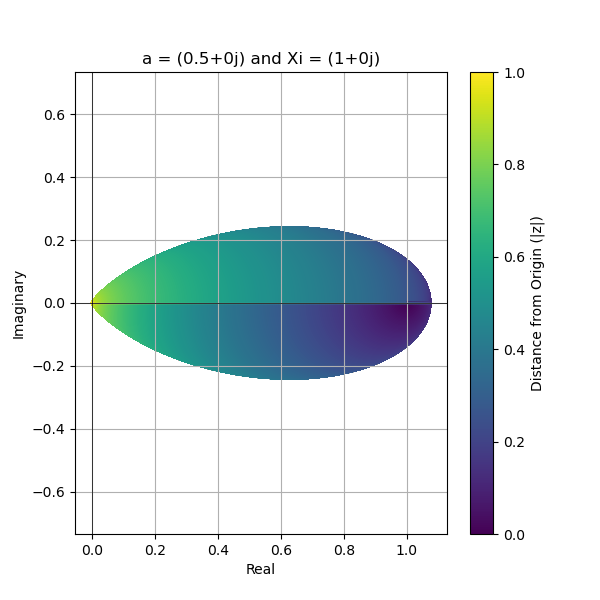}
    \caption{Berezin range $B(C_{\varphi_{0.5,1}})$ in the Bergman space over the unit disk $\mathbb{D}$.}
    \end{subfigure}
    \hfill
    \begin{subfigure}{0.40\textwidth}
        \centering
        \includegraphics[width=\linewidth]{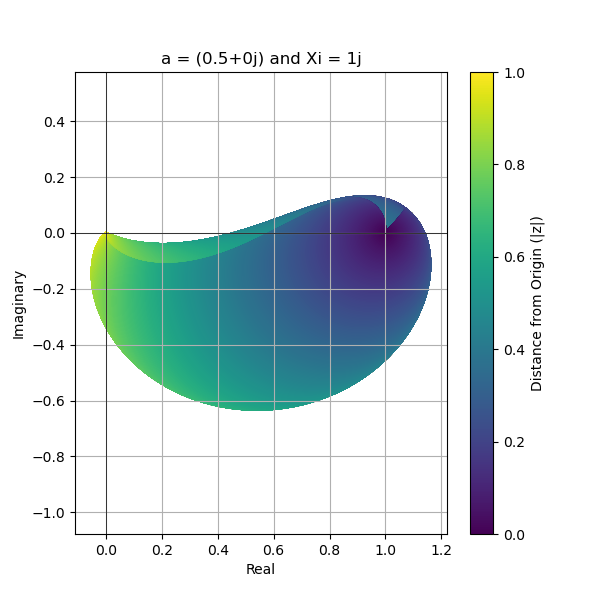}
        \caption{Berezin range $B(C_{\varphi_{0.5,i}})$ in the Bergman space over the unit disk $\mathbb{D}$.}
    \end{subfigure}
\end{figure}
\newpage
\begin{figure}[htbp]
    \ContinuedFloat
    \vspace{1cm}

    \begin{subfigure}{0.40\textwidth}
        \centering
        \includegraphics[width=\linewidth]{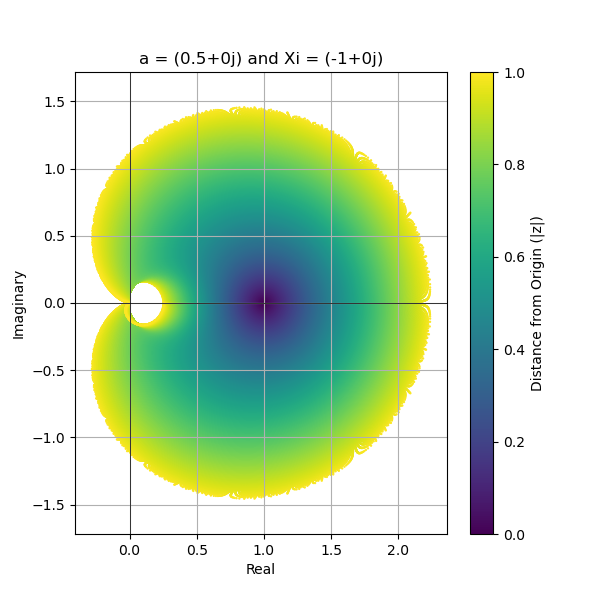}
        \caption{Berezin range $B(C_{\varphi_{0.5,-1}})$ in the Bergman space over the unit disk $\mathbb{D}$.}
    \end{subfigure}
    \hfill
    \begin{subfigure}{0.40\textwidth}
        \centering
        \includegraphics[width=\linewidth]{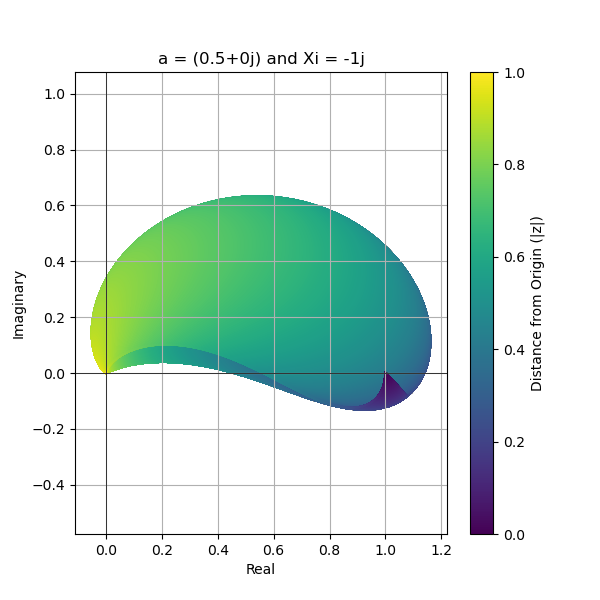}
        \caption{Berezin range $B(C_{\varphi_{0.5,-i}})$ in the Bergman space over the unit disk $\mathbb{D}$.}
    \end{subfigure}
\end{figure}
One of the properties of interest is for what values of $a \in \mathbb{D}$ and $\xi \in \T$ the Berezin range is symmetric about the real line.
\begin{proposition}\label{conjugateberezin}
$B(C_{\varphi_{a,\xi}})=\overline{B(C_{\varphi_{a,\overline{\xi}}})}$    
\end{proposition}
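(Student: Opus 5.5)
The plan is to work directly from the closed form of the Berezin transform obtained in the proof of Lemma \ref{ReImBerezinTransform}, namely
\[
\widetilde{C}_{\varphi_{a,\xi}}(z)=\left[\frac{(1-|z|^2)(1-\overline{a}z)}{1-\overline{a}z-\xi a\overline{z}+\xi|z|^2}\right]^2 ,
\]
which comes from Lemma \ref{BerezinRangeComposition}. I will exhibit a bijection $\sigma:\mathbb{D}\to\mathbb{D}$ such that
\[
\widetilde{C}_{\varphi_{a,\overline{\xi}}}(\sigma(w))=\overline{\widetilde{C}_{\varphi_{a,\xi}}(w)}\qquad\text{for all } w\in\mathbb{D}.
\]
Taking images over $w\in\mathbb{D}$ then gives $B(C_{\varphi_{a,\overline{\xi}}})=\overline{B(C_{\varphi_{a,\xi}})}$. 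Conjugating both sides yields the stated identity.

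First I conjugate the formula at a point $w$. This gives
\[
\overline{\widetilde{C}_{\varphi_{a,\xi}}(w)}=\left[\frac{(1-|w|^2)(1-a\overline{w})}{1-a\overline{w}-\overline{\xi}\,\overline{a}w+\overline{\xi}|w|^2}\right]^2 .
\]
Next I compare this with $\widetilde{C}_{\varphi_{a,\overline{\xi}}}(z)$. The two expressions agree provided $z$ satisfies $\overline{a}z=a\overline{w}$, $a\overline{z}=\overline{a}w$ and $|z|=|w|$.

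For $a=0$, I take $\sigma(w)=\overline{w}$, and only $|z|=|w|$ is needed. For $a\neq 0$, I write $a=|a|e^{i\theta}$ and set $\sigma(w)=e^{2i\theta}\overline{w}$. Then $\overline{a}\sigma(w)=|a|e^{i\theta}\overline{w}=a\overline{w}$, and the second condition is just the conjugate of the first. Moreover $|\sigma(w)|=|w|$, and $\sigma$ is an involutive bijection of $\mathbb{D}$ (a reflection).

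Substituting $z=\sigma(w)$ makes the numerator and denominator of the two expressions coincide term by term. This gives the pointwise identity, and the equality of ranges follows because $\sigma$ is onto $\mathbb{D}$.

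There is no real obstacle here. The only step requiring thought is guessing the right reflection $\sigma$, and that is dictated by matching the single non-radial quantity $\overline{a}z$ with $a\overline{w}$.
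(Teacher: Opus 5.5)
Your proof is correct and is essentially the paper's argument: the paper uses the same reflection $z\mapsto e^{2i\arg a}\,\overline{z}$ (its choice $w=re^{i(2\psi-\theta)}$ for $z=re^{i\theta}$, $a=\rho e^{i\psi}$), chosen so that $\overline{a}w=a\overline{z}$, and then matches the two transforms. The differences are cosmetic: the paper checks the match through the pieces $A,B,C,D$ of Lemma \ref{ReImBerezinTransform} rather than the closed form, and your version also treats $a=0$ and uses the bijectivity of the reflection to get both inclusions explicitly.
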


\begin{proof}
Let $\xi \in \mathbb{T}$, $z=re^{i\theta}$, and nonzero $a=\rho e^{i\psi}$. We want to show that there exists $w \in \mathbb{D}$ such that 
\[\widetilde{C}_{\varphi_{a,\xi}}(z)= \overline{\widetilde{C}_{\varphi_{a,\overline{\xi}}}(w)}.\]
For $w=re^{i(2\psi-\theta)} \in \mathbb{D}$, we have $w\overline{a}=a\overline{z}$ which is equivalent to $\mathrm{Re}(\overline{a}z)=\mathrm{Re}(\overline{a}w)$ and $\mathrm{Im}(\overline{a}z)=-\mathrm{Im}(\overline{a}w)$.
From Lemma \ref{ReImBerezinTransform}, we have 
\begin{align*}
A_{\xi}(z) &= -\mathrm{Re}(\overline{a}z)(1+\mathrm{Re}(\xi))+ 1 + \mathrm{Re}(\xi)|z|^2-\mathrm{Im}(\xi)\mathrm{Im}(\overline{a}z) \\
&= -\mathrm{Re}(\overline{a}w)(1+\mathrm{Re}(\overline{\xi}))+ 1 + \mathrm{Re}(\overline{\xi})|w|^2-\mathrm{Im}(\overline{\xi})\mathrm{Im}(\overline{a}w) \\
&= A_{\overline{\xi}}(w), \\ \\
B_{\xi}(z) &= -\mathrm{Im}(\overline{a}z)(1-\mathrm{Re}(\xi))+\mathrm{Im}(\xi)|z|^2-\mathrm{Im}(\xi)\mathrm{Re}(\overline{a}z) \\
&= \mathrm{Im}(\overline{a}w)(1-\mathrm{Re}(\overline{\xi}))-\mathrm{Im}(\overline{\xi})|w|^2+\mathrm{Im}(\overline{\xi})\mathrm{Re}(\overline{a}w) \\
&= - B_{\overline{\xi}}(w), \\ \\
C(z) &=\mathrm{Re}(1-\overline{a}z)^2-\mathrm{Im}(1-\overline{a}z)^2 \\
&=\mathrm{Re}(1-\overline{a}w)^2-\mathrm{Im}(1-\overline{a}w)^2 \\
&= C(w), \\ \\
D(z) &= -\mathrm{Re}(1-\overline{a}z)\mathrm{Im}(1-\overline{a}z) \\
&= \mathrm{Re}(1-\overline{a}w)\mathrm{Im}(1-\overline{a}w)\\
&= -D(w).
\end{align*}
By Lemma \ref{ReImBerezinTransform},
\begin{align*}
\overline{\widetilde{C}_{\varphi_{a,\overline{\xi}}}(w)} =& \left[\dfrac{1-|w|^2}{A_{\overline{\xi}}(w)^2+B_{\overline{\xi}}(w)^2}\right]^2(C(w)[A_{\overline{\xi}}(w)^2-B_{\overline{\xi}}(w)^2]+4A_{\overline{\xi}}(w)B_{\overline{\xi}}(w)D(w))\\
& - 2i\left[\dfrac{1-|w|^2}{A_{\overline{\xi}}(w)^2+B_{\overline{\xi}}(w)^2}\right]^2(D(w)[A_{\overline{\xi}}(w)^2-B_{\overline{\xi}}(w)^2]-A_{\overline{\xi}}(w)B_{\overline{\xi}}(w)C(w)) \\ 
=& \left[\dfrac{1-|z|^2}{A_{\xi}(z)^2+B_{\xi}(z)^2}\right]^2(C(z)[A_{\xi}(z)^2-B_{\xi}(z)^2]+4A_{\xi}(z)B_{\xi}(z)D(z))\\
& + 2i\left[\dfrac{1-|z|^2}{A_{\xi}(z)^2+B_{\xi}(z)^2}\right]^2(D(z)[A_{\xi}(z)^2-B_{\xi}(z)^2]-A_{\xi}(z)B_{\xi}(z)C(z)) \\ 
=& \ \widetilde{C}_{\varphi_{a,\xi}}(z).
\end{align*}
This holds for all $z \in \mathbb{D}$, which tells us that $B(C_{\varphi_{a,\xi}})=\overline{B(C_{\varphi_{a,\overline{\xi}}})}$ as sets.  
\end{proof}
This result further shows that the Berezin range is only complex conjugate symmetric when $\xi = \pm 1$. The precise statement is the following corollary.
\begin{corollary}\label{propconj}
Let $\xi \in \mathbb{T}$.  $B(C_{\varphi_{a,\xi}})$ is closed under complex conjugation if and only if $\mathrm{Im}(\xi) = 0$.
\end{corollary}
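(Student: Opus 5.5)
The ``if'' direction follows directly from Proposition \ref{conjugateberezin}. If $\mathrm{Im}(\xi)=0$, then $\xi=\overline{\xi}$, so the proposition gives $B(C_{\varphi_{a,\xi}})=\overline{B(C_{\varphi_{a,\xi}})}$, which says exactly that the Berezin range is closed under complex conjugation. For the ``only if'' direction I will argue by contraposition. Assume $\mathrm{Im}(\xi)\neq 0$. Proposition \ref{conjugateberezin} shows that closure under conjugation is equivalent to $B(C_{\varphi_{a,\xi}})=B(C_{\varphi_{a,\overline{\xi}}})$. So it suffices to exhibit a value $w_0\in B(C_{\varphi_{a,\xi}})$ such that $\overline{w_0}\notin B(C_{\varphi_{a,\xi}})$.

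The first step is the case $a=0$, which is explicit. Lemma \ref{BerezinRangeComposition} gives
\[
\widetilde{C}_{\varphi_{0,\xi}}(z)=\left[\frac{1-|z|^2}{1+\xi|z|^2}\right]^2 .
\]
Put $t=|z|^2\in[0,1)$. Then $1+\xi t$ has real part $1+t\,\mathrm{Re}(\xi)>0$ and imaginary part $t\,\mathrm{Im}(\xi)$, whose sign is that of $\mathrm{Im}(\xi)$ for $t>0$. Hence $\arg(1+\xi t)$ lies strictly between $0$ and $\pm\pi/2$, on the side determined by $\mathrm{Im}(\xi)$. Consequently every value with $t\in(0,1)$ has argument strictly between $0$ and $\mp\pi$. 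All nonreal points of the range therefore lie in a single open half-plane, and nonreal points do occur (for instance at small $t>0$). The conjugate of any such point is not in the range.

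The second step is the case $a\neq 0$, where I plan to use Lemma \ref{ReImBerezinTransform} to compare the extreme values of the imaginary part. If the range is closed under conjugation, then $\sup_{z}\mathrm{Im}\,\widetilde{C}_{\varphi_{a,\xi}}(z)=-\inf_{z}\mathrm{Im}\,\widetilde{C}_{\varphi_{a,\xi}}(z)$. I would try to show this fails by restricting to the ray $z=r a/|a|$. On this ray $\overline{a}z$ is real, so $\mathrm{Im}(\overline{a}z)=0$ and hence $D=0$. The imaginary part then reduces to $-2\left[\frac{1-r^2}{A^2+B^2}\right]^2 A\,B\,C$, with $B=\mathrm{Im}(\xi)\bigl(r^2-|a|r\bigr)$. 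This should give a one-parameter family of values whose imaginary parts are controlled by the sign of $\mathrm{Im}(\xi)$. The remaining task is to show that no point $z$ off the ray produces the conjugate values. For that I would use the first-order expansion near $0$,
\[
\widetilde{C}_{\varphi_{a,\xi}}(z)=1+2\xi a\overline{z}+O(|z|^2),
\]
together with boundary asymptotics, to locate the point of largest $|\mathrm{Im}|$ on each side of the real axis and check that these two extremes are not symmetric.

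The main obstacle is this second step. Since $B(C_{\varphi_{a,\xi}})$ and its conjugate $B(C_{\varphi_{a,\overline{\xi}}})$ both contain a full neighbourhood of $1$ when $a\neq 0$, any witness $w_0$ must be found away from $1$. Proving non-membership of $\overline{w_0}$ requires a global statement about the range, not a local one. If the sup/inf comparison proves intractable, my fallback is a continuity argument in $a$: deform from the case $a=0$ settled in the first step and track a point of the range that stays in the open half-plane determined by $\mathrm{Im}(\xi)$ while its conjugate stays outside.
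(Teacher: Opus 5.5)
Your ``if'' direction and your treatment of $a=0$ are correct. The paper records this corollary as a consequence of Proposition~\ref{conjugateberezin}, which gives the ``if'' direction exactly as you do, but it offers no separate argument for the converse. You are right that $B(C_{\varphi_{a,\xi}})=\overline{B(C_{\varphi_{a,\overline{\xi}}})}$ by itself does not exclude $B(C_{\varphi_{a,\xi}})=B(C_{\varphi_{a,\overline{\xi}}})$.

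The genuine gap is the case $a\neq 0$. This is the entire content of the ``only if'' direction, and you leave it as a plan. Its first ingredient already fails. On the ray $z=ra/|a|$ you have $D=0$, $C=(1-r|a|)^2>0$, and $A=1-|a|^2>0$ at $r=|a|$. But $B=\mathrm{Im}(\xi)(r^2-r|a|)$ changes sign at $r=|a|$, where $\widetilde{C}_{\varphi_{a,\xi}}(a)=(1-|a|^2)^2$ is real. So $\mathrm{Im}\,\widetilde{C}_{\varphi_{a,\xi}}$ changes sign along the ray, and the family is not one-signed. The same computation shows that for every $a\neq 0$ the range meets both open half-planes. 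Hence the half-plane witness from $a=0$ cannot be carried along in $a$, and your continuity fallback has nothing to track. The $\sup/\inf$ comparison would require locating the global extrema of $\mathrm{Im}\,\widetilde{C}_{\varphi_{a,\xi}}$, which nothing in the proposal does. The expansion $1+2\xi a\overline{z}$ only describes the range near $1$, where, as you note, both sets agree.

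One way to close the gap is to look at how the range approaches $0$ rather than at its extremes. Put $q=1-\overline{a}z$ and $s=1-|z|^2$, fix a square root $\xi^{1/2}$, and let $R=2\,\mathrm{Re}(\xi^{-1/2}q)\in\mathbb{R}$. The denominator $1-\overline{a}z-\xi a\overline{z}+\xi|z|^2$ from the proof of Lemma~\ref{ReImBerezinTransform} equals $q+\xi\overline{q}-\xi s=\xi^{1/2}(R-\xi^{1/2}s)$. Hence
\[
\widetilde{C}_{\varphi_{a,\xi}}(z)=\overline{\xi}\,\frac{q}{\overline{q}}\cdot\frac{s^2|q|^2}{(R-\xi^{1/2}s)^2}.
\]
Since $|q|\ge 1-|a|$, the bound $|\widetilde{C}_{\varphi_{a,\xi}}(z)|\le\varepsilon$ with $\sqrt{\varepsilon}<\tfrac12(1-|a|)$ forces $s\le 2\sqrt{\varepsilon}\,|R|/(1-|a|)$. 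So the last factor has argument $O(\sqrt{\varepsilon})$, and small values point within $O(\sqrt{\varepsilon})$ of $\overline{\xi}q/\overline{q}=\overline{\xi}e^{2i\arg q}$, with $|\arg q|<\arcsin|a|$. For the reverse inclusion, take radial limits $z=r\zeta$, $r\to 1^-$, at the $\zeta\in\mathbb{T}$ with $R\neq 0$ (all but at most two points), and use closedness. Thus the set of directions in which the range accumulates at $0$ is exactly the closed arc $\{\overline{\xi}e^{it}:|t|\le 2\arcsin|a|\}$. This arc has length less than $2\pi$ and midpoint $\overline{\xi}$, while its conjugate has midpoint $\xi$. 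Closure of the range under conjugation therefore forces $\xi=\overline{\xi}$, and this argument covers $a=0$ as well.
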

This is not the only useful property that we obtain from Proposition \ref{conjugateberezin}. If we recall that complex conjugation is convex invariant we get the following,
\begin{corollary}\label{propconvex}
    $B(C_{\varphi_{a,\xi}})$ is convex if and only if $B(C_{\varphi_{a,\overline{\xi}}})$ is convex.
\end{corollary}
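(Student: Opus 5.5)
By Proposition \ref{conjugateberezin}, we have $B(C_{\varphi_{a,\xi}})=\overline{B(C_{\varphi_{a,\overline{\xi}}})}$. The corollary should therefore follow once we check that complex conjugation $c:\C\to\C$, $c(w)=\overline{w}$, preserves convexity of sets. We will then apply this observation in both directions, using $\overline{\overline{\xi}}=\xi$.

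First, we verify that $c$ is a real-linear bijection of $\C\cong\mathbb{R}^2$, namely the reflection $(x,y)\mapsto(x,-y)$. For $w_1,w_2\in\C$ and $t\in[0,1]$ we have
\[
\overline{t w_1+(1-t)w_2}=t\,\overline{w_1}+(1-t)\,\overline{w_2},
\]
since $t$ is real. Hence if $S\subset\C$ is convex and $\overline{w_1},\overline{w_2}\in\overline{S}$, the segment between them equals the conjugate of the segment between $w_1,w_2\in S$. That segment lies in $S$, so its conjugate lies in $\overline{S}$. Thus $\overline{S}$ is convex whenever $S$ is.

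Now suppose $B(C_{\varphi_{a,\overline{\xi}}})$ is convex. By the previous step its conjugate is convex, and by Proposition \ref{conjugateberezin} that conjugate is exactly $B(C_{\varphi_{a,\xi}})$. For the converse, apply the same argument with $\xi$ replaced by $\overline{\xi}\in\T$. Proposition \ref{conjugateberezin} then gives $B(C_{\varphi_{a,\overline{\xi}}})=\overline{B(C_{\varphi_{a,\xi}})}$, which is convex whenever $B(C_{\varphi_{a,\xi}})$ is.

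There is no real obstacle here. The only point to watch is that the proof of Proposition \ref{conjugateberezin} was written for nonzero $a$. If $a=0$, then $\varphi_{0,\xi}(z)=-\xi z$, and the Berezin transform is $\left[\frac{1-|z|^2}{1+\xi|z|^2}\right]^2$. Its conjugate is obtained by replacing $\xi$ with $\overline{\xi}$ at the same $z$, so the set identity holds directly in that case as well.
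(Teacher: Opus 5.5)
Your proof is correct and follows the paper's argument exactly: the corollary comes from Proposition \ref{conjugateberezin} together with the fact that complex conjugation, a real-linear reflection of $\C$, preserves convexity. Your separate check of the case $a=0$, where the proof of Proposition \ref{conjugateberezin} assumed $a\neq 0$, fills a small point the paper leaves implicit.
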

Next we will show that the Berezin range is rotationally invariant.  That is, for any fixed unimodular constant $\lambda$, the Berezin ranges corresponding to $\lambda a$ and $a$ are equal.
\begin{proposition}\label{auto_on_disk}
    Let $\tau$ be a rotation of $\mathbb{D}$.  Then the Berezin range $B(C_{\varphi_{\tau(a),\xi}})=B(C_{\varphi_{a,\xi}})$ as sets. 
\end{proposition}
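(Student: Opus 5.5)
The plan is to exhibit, for a rotation $\tau(z)=\lambda z$ with $\lambda\in\T$, an explicit bijection $z\mapsto w$ of $\mathbb{D}$ such that $\widetilde{C}_{\varphi_{\lambda a,\xi}}(w)=\widetilde{C}_{\varphi_{a,\xi}}(z)$. The natural choice is $w=\lambda z$. This should work because the Berezin transform depends on $z$ and $a$ only through $|z|^2$, $\overline{a}z$, $a\overline{z}$, and $|a|^2$, all of which are invariant under $(a,z)\mapsto(\lambda a,\lambda z)$.

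Concretely, I would start from the closed form obtained in the proof of Lemma \ref{ReImBerezinTransform}, which follows from Lemma \ref{BerezinRangeComposition}:
\[
\widetilde{C}_{\varphi_{a,\xi}}(z)=\left[\dfrac{(1-|z|^2)(1-\overline{a}z)}{1-\overline{a}z-\xi a\overline{z}+\xi|z|^2}\right]^2 .
\]
Replacing $a$ by $\lambda a$ and $z$ by $w=\lambda z$ gives $\overline{\lambda a}\,\lambda z=\overline{a}z$ (since $|\lambda|=1$), $\lambda a\,\overline{\lambda z}=a\overline{z}$, and $|w|^2=|z|^2$. Substituting these, the expression is literally unchanged, so $\widetilde{C}_{\varphi_{\lambda a,\xi}}(\lambda z)=\widetilde{C}_{\varphi_{a,\xi}}(z)$ for every $z\in\mathbb{D}$. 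Equivalently, one can check that the quantities $A,B,C,D$ of Lemma \ref{ReImBerezinTransform} depend only on $\mathrm{Re}(\overline{a}z)$, $\mathrm{Im}(\overline{a}z)$, and $|z|^2$, mirroring the style of Proposition \ref{conjugateberezin}.

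Since $z\mapsto\lambda z$ is a bijection of $\mathbb{D}$, this gives $B(C_{\varphi_{\lambda a,\xi}})\supseteq B(C_{\varphi_{a,\xi}})$. The reverse inclusion follows by applying the same identity with $\lambda$ replaced by $\overline{\lambda}$ and $a$ by $\lambda a$.

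There is no real obstacle; the only point requiring care is to keep $\xi$ fixed while rotating $a$. A more conceptual alternative is to note that $\varphi_{\lambda a,\xi}=\tau\circ\varphi_{a,\xi}\circ\tau^{-1}$, so $C_{\varphi_{\lambda a,\xi}}=C_{\tau^{-1}}C_{\varphi_{a,\xi}}C_{\tau}$, where $C_\tau$ is unitary on $A^2(\mathbb{D})$ and maps normalized kernels to normalized kernels. I would still present the direct computation, which is immediate.
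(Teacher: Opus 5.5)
Your proof is correct and is essentially the paper's argument: the paper substitutes into the same closed form to get $\widetilde{C}_{\varphi_{\tau(a),\xi}}(z)=\widetilde{C}_{\varphi_{a,\xi}}(\overline{\lambda}z)$, which is your identity after the change of variables $z\mapsto\overline{\lambda}z$, and then concludes using surjectivity of the rotation. Since $z\mapsto\lambda z$ is a bijection of $\mathbb{D}$, your identity already gives equality of the two ranges directly, so your separate reverse-inclusion step is harmless but unnecessary.
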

\begin{proof}
Let $\tau(z)=\lambda z$ for some $|\lambda|=1$ and $\tau^*(z)=\overline{\lambda}z$. From Lemma \ref{ReImBerezinTransform},
\[\widetilde{C}_{\varphi_{\tau(a),\xi}}(z)=\left[\dfrac{(1-|z|^2)(1-\overline{\lambda a}z)}{1-\overline{\lambda a}z-\xi \lambda a\overline{z}+\xi|z|^2}\right]^2=\widetilde{C}_{\varphi_{a,\xi}}(\overline{\lambda}z)=\widetilde{C}_{\varphi_{a,\xi}}(\tau^*(z)).\]
Now $\tau^*:\mathbb{D}\to \mathbb{D} $ is surjective, so 
\[\widetilde{C}_{\varphi_{\tau(a),\xi}}(\mathbb{D})=\widetilde{C}_{\varphi_{a,\xi}}(\tau^*(\mathbb{D}))=\widetilde{C}_{\varphi_{a,\xi}}(\mathbb{D}).\]  Thus $B(C_{\varphi_{\tau(a),\xi}})=B(C_{\varphi_{a,\xi}})$ as sets.
This completes the proof.
\end{proof}
\begin{corollary}\label{auto_on_disk2}
    Let $\tau$ be a rotation of $\mathbb{D}$.  Then the Berezin range $B(C_{\varphi_{\tau(a),\xi}})$ on $A^2(\mathbb{D})$ is convex if and only if the Berezin range $B(C_{\varphi_{a,\xi}})$ on $A^2(\mathbb{D})$ is convex.
\end{corollary}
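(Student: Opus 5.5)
This corollary follows directly from Proposition \ref{auto_on_disk}. Convexity is a property of a set, not of how the set is parametrized. Proposition \ref{auto_on_disk} states that for any rotation $\tau(z)=\lambda z$ with $|\lambda|=1$ and any $a\in\mathbb{D}$, $\xi\in\T$, we have the set equality $B(C_{\varphi_{\tau(a),\xi}})=B(C_{\varphi_{a,\xi}})$. The plan is simply to observe that two equal subsets of $\C$ are either both convex or both non-convex.

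Concretely, assume first that $B(C_{\varphi_{a,\xi}})$ is convex. Take two points $w_1,w_2\in B(C_{\varphi_{\tau(a),\xi}})$ and $t\in[0,1]$. By Proposition \ref{auto_on_disk}, both points lie in $B(C_{\varphi_{a,\xi}})$, so by convexity $tw_1+(1-t)w_2\in B(C_{\varphi_{a,\xi}})$. Applying Proposition \ref{auto_on_disk} again, this point lies in $B(C_{\varphi_{\tau(a),\xi}})$. The converse is the same argument with the roles of the two sets swapped. One can also apply the forward direction to the rotation $\tau^*(z)=\overline{\lambda}z$ and the point $\tau(a)$, since $\tau^*(\tau(a))=a$.

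There is no real obstacle. The only point to check is that Proposition \ref{auto_on_disk} gives equality of the ranges as sets, not merely a reparametrization or an inclusion in one direction. Its proof shows exactly this, using surjectivity of $\tau^*$ on $\mathbb{D}$, so the corollary follows immediately.
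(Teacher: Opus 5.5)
Your proposal is correct and matches the paper's approach. The paper states this corollary without a separate proof, as an immediate consequence of the set equality $B(C_{\varphi_{\tau(a),\xi}})=B(C_{\varphi_{a,\xi}})$ in Proposition~\ref{auto_on_disk}; you spell out that convexity, being a property of the set alone, transfers between equal sets.
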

We are ready to prove Theorem \ref{MainResultOneDimension}. 
\begin{proof}[Proof of Theorem \ref{MainResultOneDimension}]
By Proposition \ref{auto_on_disk} without loss of generality we can assume that $a$ is a nonnegative real number.
Suppose $B(C_{\varphi_{a,1}})$ is convex, we want to show that $0\leq a \leq \frac{\sqrt{2}}{2}$.
Assume the contrary. Using Lemma \ref{ReImBerezinTransform} and writing the real and imaginary parts of $\widetilde{C}_{\varphi_{a,1}}(z)$ in terms of polar coordinates we have,
\begin{align} 
\mathrm{Re}(\widetilde{C}_{\varphi_{a,1}})(r,\theta)=&\left(\dfrac{1-r^2}{1-2ar\cos\theta+r^2} \right)^2\left(\left(1-ar\cos\theta \right)^2-\left(ar\sin\theta\right)^2 \right) \label{polarform}  \\
\mathrm{Im}(\widetilde{C}_{\varphi_{a,1}})(r,\theta)=&2\left(\dfrac{1-r^2}{1-2ar\cos\theta+r^2} \right)^2\left(1-ar\cos\theta \right)\left(-ar\sin\theta\right). \nonumber
\end{align}
Suppose $\mathrm{Re}(\widetilde{C}_{\varphi_{a,1}})(r,\theta)=0$, then this is equivalent to $\left(1-ar\cos\theta \right)^2-\left(ar\sin\theta\right)^2 =0$. Further, 
\begin{align}
 \left(1-ar\cos\theta \right)^2-\left(ar\sin\theta\right)^2=0 \quad 
 \Leftrightarrow& \quad 1-2ar\cos\theta+a^2r^2(2\cos^2\theta-1)=0 \nonumber \\
 \Leftrightarrow& \quad 2a^2r^2\cos^2\theta-2ar\cos\theta+1-a^2r^2=0 \nonumber \\
 \Leftrightarrow& \quad \cos^2\theta-\dfrac{1}{ar}\cos\theta+\dfrac{1-a^2r^2}{2a^2r^2}=0. \label{quadraticequation}
\end{align}
The quadratic equation in terms of $\cos \theta$ has a solution for $ \frac{\sqrt{2}}{2r}\leq a.$ By our assumption that $\frac{\sqrt{2}}{2}<a<1$, we have $\frac{\sqrt{2}}{2}<r<1$. We will show that equation \eqref{quadraticequation} has a solution for all $\theta \in [0,2\pi]$ where $\theta$ depends on $a$ and $r$ satisfying the inequality $\frac{\sqrt{2}}{2r} \leq a < 1.$ Solving for the solution of $\cos \theta$, we have 
\begin{align*}
\cos \theta = \dfrac{1 \pm \sqrt{2a^2r^2-1}}{2ar} \Rightarrow& -1 \leq \dfrac{1 \pm \sqrt{2a^2r^2-1}}{2ar} \leq 1 \\
\Rightarrow& -2ar-1 \leq \pm \sqrt{2a^2r^2-1} \leq 2ar-1.
\end{align*}
This indicates that both inequalities  
\begin{align}\label{inequality1}
-2ar-1 \leq \sqrt{2a^2r^2-1} \leq 2ar-1, \text{and}    
\end{align}
\begin{align}\label{inequality2}
-2ar-1 \leq - \sqrt{2a^2r^2-1} \leq 2ar-1    
\end{align}
must hold true.
\noindent In inequality \eqref{inequality1}, $-2ar-1 \leq \sqrt{2a^2r^2-1}$ holds true. Since $a^2r^2-2ar+1=(ar-1)^2 \geq 0$, we have
\begin{align*}
0 \leq a^2r^2-2ar+1  \Leftrightarrow&\  0 \leq 2a^2r^2-4ar+2  \\
\Leftrightarrow&\ 2a^2r^2-1 \leq 4a^2r^2-4ar+1  \\
\Leftrightarrow&\ 2a^2r^2-1 \leq (2ar-1)^2 \\
\Leftrightarrow&\ \sqrt{2a^2r^2-1} \leq 2ar-1. \\
\end{align*}
In inequality \eqref{inequality2}, since $2ar-1 \geq 0$ we have $- \sqrt{2a^2r^2-1} \leq 2ar-1$. Since $a^2r^2+2ar+1=(ar+1)^2 \geq 0$, we have
\begin{align*}
0 \leq a^2r^2+2ar+1 \Leftrightarrow&\ 0 \leq 2a^2r^2+4ar+2 \\
\Leftrightarrow&\ 2a^2r^2-1 \leq 4a^2r^2+4ar+1  \\
\Leftrightarrow&\ 2a^2r^2-1 \leq (-2ar-1)^2  \\
\Leftrightarrow&\ -\sqrt{2a^2r^2-1} \leq -2ar-1. \\
\end{align*}
The affirmation of inequalities \eqref{inequality1} and \eqref{inequality2} shows that for $\frac{\sqrt{2}}{2r} \leq a < 1$ there exists a $\theta(a,r)$, that depends on $a$ and $r$, satisfying equation \eqref{quadraticequation}. Thus, we have $\mathrm{Re}(\widetilde{C}_{\varphi_{a,1}})(r,\theta(a,r))=0$ and $\mathrm{Im}(\widetilde{C}_{\varphi_{a,1}})(r,\theta(a,r)) \neq 0$, showing that
$\widetilde{C}_{{\varphi}_{a,1}}(r,\theta(a,r))\neq 0$ lies on the imaginary axis. By Corollary \ref{propconj}, $-\widetilde{C}_{{\varphi}_{a,1}}(r,\theta(a,r))$ is also in the Berezin range.  By the assumption that $B(C_{\varphi_{a,1}})$ is convex, there is a line segment connecting $\widetilde{C}_{{\varphi}_{a,1}}(r,\theta(a,r))$ and $-\widetilde{C}_{{\varphi}_{a,1}}(r,\theta(a,r))$ is contained in the Berezin range which indicates that $0$ is in the Berezin range as well. This is a contradiction since if $\widetilde{C}_{{\varphi}_{a,1}}(z)=0$ then $|z|=1$. Therefore, if $B(C_{\varphi_{a,1}})$ is convex, then $0 \leq a \leq \frac{\sqrt{2}}{2}$.

If $a=0$, then $B(C_{\varphi_{a,1}})=\{1\}$ is convex. Let $0< a \leq \frac{\sqrt{2}}{2}$, we want to show that $B(C_{\varphi_{a,1}})$ is convex. First, we want to show that for $0< r <1$, $B(C_{\varphi_{a,1}})|_{re^{i\theta}}=\gamma(\theta)$ where $\gamma$ is a smooth loop, $\gamma(0)=\gamma(2\pi)$ and $\gamma'(\theta) \neq 0$ exists. From \eqref{polarform}, 
\[\mathrm{Re}(\widetilde{C}_{\varphi_{a,1}})(r,0)=\mathrm{Re}(\widetilde{C}_{\varphi_{a,1}})(r,2\pi)=\left(\dfrac{1-r^2}{1-2ar+r^2}\right)^2\left(1-ar\right)^2\text{ and }\mathrm{Re}(\widetilde{C}_{\varphi_{a,1}})(r,\pi)=\left(\dfrac{1-r^2}{1+2ar+r^2}\right)^2\left(1+ar\right)^2,\]
where $\mathrm{Re}(\widetilde{C}_{\varphi_{a,1}})(r,0) > \mathrm{Re}(\widetilde{C}_{\varphi_{a,1}})(r,\pi)$ holds. For $\theta \in [0,2\pi]$,
\begin{align*}
\widetilde{C}_{\varphi_{a,1}}(0,\theta)=1\ \ \mathrm{and}\ \ \ \ \widetilde{C}_{\varphi_{a,1}}(1,\theta)=0.
\end{align*}
For $\theta \in (0,\pi)$, we observe that $\mathrm{Im}(\widetilde{C}_{\varphi_{a,1}})(r,\theta) < 0$. Evaluating $\dfrac{\partial}{\partial\theta}\mathrm{Re}(\widetilde{C}_{\varphi_{a,1}})$, we have
\begin{align}\label{DerReBR}
\dfrac{\partial}{\partial\theta}\mathrm{Re}(\widetilde{C}_{\varphi_{a,1}})(r,\theta)=&-\dfrac{4ar\sin(\theta)(r-1)^2(r+1)^2\left[-ar^3\cos(\theta)-\dfrac{1}{2}+\left(a^2+\dfrac{1}{2}\right)r^2\right]}{(2ar\cos(\theta)-r^2-1)^3}.
\end{align}
For a small $\varepsilon >0$, $\dfrac{\partial}{\partial\theta}\mathrm{Re}(\widetilde{C}_{\varphi_{a,1}})(r,\varepsilon) < 0$. By continuity of the Berezin range on $\theta \in [0,\pi]$, we have a curve $\gamma_1(\theta) \subset \{z \in \mathbb{C}|\ \mathrm{Im}(z) \leq 0\}$ where $\gamma_1(0)$ and $ \gamma_{1}(\pi)$ are real valued, and $\gamma_1(\pi) < \gamma_{1}(0)$. Since $-\mathrm{Im}(\widetilde{C}_{\varphi_{a,1}})(r,\theta)=\mathrm{Im}(\widetilde{C}_{\varphi_{a,1}})(r,-\theta)$ and $\mathrm{Re}(\widetilde{C}_{\varphi_{a,1}})(r,\theta)=\mathrm{Re}(\widetilde{C}_{\varphi_{a,1}})(r,-\theta)$ for $\theta \in [\pi,2\pi]$, the Berezin range traces a curve $\gamma_2 \subset \{z \in \mathbb{C}|\mathrm{Im}(z) \geq 0\}$ where $\gamma_2(\pi)$ and $ \gamma_{2}(2\pi)$ are real valued, and $\gamma_{2}(\pi) < \gamma_{2}(2\pi)$. Moreover, $\gamma_1$ and $\gamma_2$ are conjugate symmetric, that is $\overline{\gamma_1}=\gamma_2$. For any $r \in (0,1)$, we have $ B(C_{\varphi_{a,1}})|_{re^{i\theta}}=\gamma_{1,r}(\theta) \cup \gamma_{2,r}(\theta)=\gamma_r(\theta)$ which shows that the restriction of the Berezin range to circles on the disk is a loop.

Next, we want to show that the Berezin range does not contain holes. From \eqref{polarform}, we have 
\[\dfrac{\partial}{\partial r}\mathrm{Re}(\widetilde{C}_{\varphi_{a,1}})(r,0)=\dfrac{8(r+1)(ar-1)(r-1)\left(a^2r^3-\dfrac{1}{4}ar^4-\dfrac{3}{2}ar^2-\dfrac{1}{4}a+r\right)}{(2ar-r^2-1)^3}.\]
We then solve $\dfrac{\partial}{\partial r}\mathrm{Re}(\widetilde{C}_{\varphi_{a,1}})(0,0) =2a>0$, indicating that the Berezin range is increasing at $r=0$. Since
\begin{align*}
\widetilde{C}_{\varphi_{a,1}}(0,0)=1\ \ \mathrm{and}\ \ \ \ \widetilde{C}_{\varphi_{a,1}}(1,0)=0.
\end{align*}
and the Berezin range is increasing at $r=0$, by the extreme value theorem, there is an absolute maximum that the Berezin range achieves at $r_0$ for $0<r_0<1$. The loop traced by $B(C_{\varphi_{a,1}})|_{re^{i\theta}}=\gamma_r(\theta)$ starts at $1$ then traverses to the right until reaching $B(C_{\varphi_{a,1}})|_{r_0e^{i\theta}}=\gamma_{r_0}(\theta)$. After reaching the absolute maximum at $r_0$, the loops change direction traversing to the left going to zero. Assuming that the Berezin range has a hole impedes the loops from traversing $0$ to $1$ continuously. Thus, $B(C_{\varphi_{a,1}})$ has no holes.

Now, we focus at the boundary of the Berezin range, $b(B(C_{\varphi_{a,1}})) \subset \{z \in \mathbb{C}|\mathrm{Im}(\mathbb{C}) \leq 0\} $ and show that it is concave up. By complex conjugate symmetry, this shows that $b(B(C_{\varphi_{a,1}})) \subset \{z \in \mathbb{C}|\mathrm{Im}(\mathbb{C}) \geq 0\} $ is concave down. Let $r$ be fixed, we want to solve the extrema of $\mathrm{Im}(\widetilde{C}_{\varphi_{a,1}})(r,\theta)$. 
We observe that, 
\[\dfrac{\partial}{\partial\theta}\mathrm{Im}(\widetilde{C}_{\varphi_{a,1}})(r,\theta)=\dfrac{2ar[-2ar^3\cos^2(\theta)+(2a^2r^2+r^2+1)\cos(\theta)+ar^3-3ar](r-1)^2(r+1)^2}{(2ar\cos(\theta)-r^2-1)^3}.\]
We take note that $-2ar^3\cos^2(\theta)+(2a^2r^2+r^2+1)\cos(\theta)+ar^3-3ar$ is quadratic in terms of $\cos(\theta)$ and when equated to zero, has a solution $\theta_1 \in (0,\pi)$ . Specifically, the solution $\theta_1$ is a critical value  dependent to $r$ such that $\mathrm{Im}(\widetilde{C}_{\varphi_{a,1}})(r,\theta_1)$ achieves a minimum. Using the quadratic formula, one can compute that 
\begin{align}\label{theta_1}
\theta_1 = \pi - \cos^{-1}\left(\dfrac{-2a^2r^2-r^2-1+\sqrt{4a^4r^4+8a^2r^6-20r^4a^2+4a^2r^2+r^4+2r^2+1}}{4ar^3}\right)
\end{align}
Since $\mathrm{Im}(\widetilde{C}_{\varphi_{a,1}})(r,0)=\mathrm{Im}(\widetilde{C}_{\varphi_{a,1}})(r,\pi)=0$ for $0<r<1$, we achieve global minimum at $\theta_1$.
Consider  $\min(\widetilde{C}_{\varphi_{a,1}}): [0,1] \rightarrow \mathbb{R}^2$ defined by 
\[\min(\widetilde{C}_{\varphi_{a,1}})(r)=(\mathrm{Re}(\widetilde{C}_{\varphi_{a,1}})(r,\theta_1),\mathrm{Im}(\widetilde{C}_{\varphi_{a,1}})(r,\theta_1)).\]
Since $\mathrm{Im}(\widetilde{C}_{\varphi_{a,1}})(r,\theta_1)$ achieves a minimum for any $r \in (0,1)$, 
\[\{\min(\widetilde{C}_{\varphi_{a,1}})(r)|\ r \in [0,1]\} = b(B(C_{\varphi_{a,1}})) \cap \{z \in \mathbb{C}|\ \mathrm{Im}(z) \leq 0\} \]
We then verify concavity using second derivative test, 
\[\dfrac{d^2\mathrm{Im}(\widetilde{C}_{\varphi_{a,1}})}{d\mathrm{Re}(\widetilde{C}_{\varphi_{a,1}})^2}=\dfrac{\frac{d}{dr}\left(\frac{d\mathrm{Im}(\widetilde{C}_{\varphi_{a,1}})/dr}{d\mathrm{Re}(\widetilde{C}_{\varphi_{a,1}})/dr}\right)}{\frac{dRe(\widetilde{C}_{\varphi_{a,1}})}{dr}} > 0.\]
Thus, the parametric curve is concave up and by the fact that $\min(\widetilde{C}_{\varphi_{a,1}})(0)=(1,0)$ and $\min(\widetilde{C}_{\varphi_{a,1}})(1)=(0,0)$, there exists $r=r_{\mathrm{min}}$ where $\mathrm{Im}(\widetilde{C}_{\varphi_{a,1}})(r_{\mathrm{min}},\theta_1)$ is the global minimum. By complex conjugate symmetry the boundary curve $b(B(C_{\varphi_{a,1}})) \cap \{z \in \mathbb{C}|\ \mathrm{Im}(z) \geq 0\}$ is concave down and $\mathrm{Im}(\widetilde{C}_{\varphi_{a,1}})$ achieves a global maximum at $r=r_{\mathrm{min}}$.

Thus, we have shown that the loops start at $1$, and as it travels to the right and travel back to the left until it reaches the origin, the peak of the loops increases in magnitude and reaches an extremum before it decreases to zero. The peaks of the loops trace a subboundary of the Berezin range which is convex. Another part of the boundary of the Berezin range is due to the loop $B(C_{\varphi_{a,1}})|_{r_1e^{i\theta}}$ where $r_1$ is the radius so that $\mathrm{Re}(\widetilde{C}_{\varphi_{a,1}})(r_1,0)$ is the global maximum. We want to observe the contour defined by $\widetilde{C}_{\varphi_{a,1}}(r_1,\theta): [0,\theta_1] \rightarrow \mathbb{R}^2 $ where $\theta_1$ is defined in \eqref{theta_1}.  Using second derivative test to verify concavity, 
\[\dfrac{d^2\mathrm{Im}(\widetilde{C}_{\varphi_{a,1}})}{d\mathrm{Re}(\widetilde{C}_{\varphi_{a,1}})^2}=\dfrac{\frac{d}{d\theta}\left(\frac{d\mathrm{Im}(\widetilde{C}_{\varphi_{a,1}})/d\theta}{d\mathrm{Re}(\widetilde{C}_{\varphi_{a,1}})/d\theta}\right)}{\frac{dRe(\widetilde{C}_{\varphi_{a,1}})}{d\theta}} > 0.\]
This shows that the parametric curve is concave up, and by complex conjugate symmetry, the curve above is concave down. With all of this information, we have shown that the Berezin range $B(C_{\varphi_{a,1}})$ is convex. By Proposition \ref{auto_on_disk}, $B(C_{\varphi_{a,1}})$ is convex for $0< |a| \leq \frac{\sqrt{2}}{2}$.  
\end{proof}
\begin{proof}[Proof of Theorem \ref{mainresultonedimension3}]
Let $\xi=i$. We want to show that $B(C_{\varphi_{a,i}})$ is not convex by showing that $\overline{B(C_{\varphi_{a,i}})}$ is not convex. We assume the contrary.  That is, let us assume that
$\overline{B(C_{\varphi_{a,i}})}$ is convex but $0<a<1$.
One can verify that $\widetilde{C}_{\varphi_{a,i}}(0)=1$ and $\widetilde{C}_{\varphi_{a,i}}(z)=0$ for $z \in \mathbb{T}$. By convexity, the interval $[0,1] \subset \overline{B(C_{\varphi_{a,i}})}$. Let $\varepsilon > 0$, and consider $z \in (1-\varepsilon)\mathbb{T}$. From Lemma \ref{ReImBerezinTransform}, define 
\[f(z)=D(z)[A(z)^2-B(z)^2]-A(z)B(z)C(z).\]
By assuming $0<|a|<\frac{\sqrt{2}}{2}$, will show there exists $w \in (1-\varepsilon)\mathbb{T}$ such that $f(w)<0$ is the global maximum.  By appealing to Proposition \ref{auto_on_disk}, we may assume $a$ is a positive real number and $0<a<\frac{\sqrt{2}}{2}$. 
Let $\varepsilon=0$ in the equation for $f$,  we have
\[f((1-\varepsilon)e^{it})|_{\varepsilon=0}=-\left(A\left((1-\varepsilon)e^{it})\right)\Big|_{\varepsilon=0}\right)^2*C\left((1-\varepsilon)e^{it}\right)\Big|_{\varepsilon=0}.\]
Now we define \[F(t)=C\left((1-\varepsilon)e^{it}\right)\Big|_{\varepsilon=0}=1-2a\cos(t)+2a^2\cos^2(t)-a^2.\]
We compute $F'(t)=2a\sin(t)\left(1-2a\cos(t)\right)=0$ to get
$t=0,\pi,2\pi$.  Additionally, if $\frac{1}{2}\leq a<\frac{\sqrt{2}}{2}$ we get another critical number $t_0$ where $\cos(t_0)=\frac{1}{2a}$.
Clearly, $F(0)=F(2\pi)>0$.  Also, $F(\pi)>0$.  So if $0<a<\frac{1}{2}$, $\displaystyle\inf_{t\in [0,2\pi]}F(t)>0$.  By the uniform continuity of $C\left((1-\varepsilon)e^{it}\right)$ in $\varepsilon$, we have that
$C\left((1-\varepsilon)e^{it}\right)\geq M>0$ for all $\varepsilon>0$ sufficiently small. 
Hence there exists $w \in (1-\varepsilon)\mathbb{T}$ such that $f(w)<0$ is the global maximum, contradicting our convexity assumption.  Now if 
$\frac{1}{2}\leq a<\frac{\sqrt{2}}{2}$, we just need to verify $F(t_0)>0$.
We substitute $t_0$ into $F(t)$ to get
$F(t_0)=1-2a\cos(t_0)+2a^2\cos^2(t_0)-a^2=\frac{1}{2}-a^2>0$ if $a<\frac{\sqrt{2}}{2}$.
This completes the proof for $0<a<\frac{\sqrt{2}}{2}$.

We need a slightly different approach for 
$\frac{\sqrt{2}}{2}\leq a<1$. Using the formula for the real and imaginary parts of 
$\widetilde{C}_{\varphi_{a,i}}$ from Lemma \ref{ReImBerezinTransform}, let us define
\[R(re^{i\theta})=C(re^{i\theta})[A(re^{i\theta})^2-B(re^{i\theta})^2]+4A(re^{i\theta})B(re^{i\theta})D(re^{i\theta})\]
and also define
\[f(re^{i\theta})=D(re^{i\theta})[A(re^{i\theta})^2-B(re^{i\theta})^2]-A(re^{i\theta})B(re^{i\theta})C(re^{i\theta}).\]
First let us fix $\theta=\frac{3\pi}{2}$ and $a\in (-1,-\frac{\sqrt{2}}{2})$.  
Then \[R(re^{i\frac{3\pi}{2}})=(1-a^2r^2)[(1-ar)^2-(r^2-ar)^2]-4(1-ar)(r^2-ar)ar.\]
Clearly, $R(0)>0$ and $R(e^{i\frac{3\pi}{2}})<0$.  So by the intermediate value theorem, there exists $d_0\in (0,1)$ so that $R\left(d_0e^{i\frac{3\pi}{2}}\right)=0$.  Furthermore, for this choice of $d_0$, one can check that
$f\left(d_0e^{i\frac{3\pi}{2}}\right)<0$.  That is,
there exists $z_0\in \mathbb{D}$ so that
$\mathrm{Re}(\widetilde{C}_{\varphi_{a,i}})(z_0)=0$ and
$\mathrm{Im}(\widetilde{C}_{\varphi_{a,i}})(z_0)<0$.  Now 
we let $\theta=\frac{\pi}{4}$ and $a\in (\frac{\sqrt{2}}{2},1)$.  By Lemma \ref{ReImBerezinTransform}, we can solve 
\[A(re^{i\frac{\pi}{4}})=1-ar\sqrt{2}=0\] and also
\[C(re^{i\frac{\pi}{4}})=A(re^{i\frac{\pi}{4}})\] to get the solution
$r_0=\frac{\sqrt{2}}{2a}$.  If $\frac{\sqrt{2}}{2}<a<1$, then
$r_0\in (0,1)$.  Furthermore,
$\mathrm{Re}(\widetilde{C}_{\varphi_{a,i}})(r_0e^{i\frac{\pi}{4}})=0$ and
$\mathrm{Im}(\widetilde{C}_{\varphi_{a,i}})(r_0e^{i\frac{\pi}{4}})>0$.  Hence if $\frac{\sqrt{2}}{2}<a<1$, then there exists $z_0,w_0\in \mathbb{D}$ so that
$\mathrm{Re}(\widetilde{C}_{\varphi_{a,i}})(z_0)=0$, $\mathrm{Im}(\widetilde{C}_{\varphi_{a,i}})(z_0)<0$,
 $\mathrm{Re}(\widetilde{C}_{\varphi_{a,i}})(w_0)=0$, and
$\mathrm{Im}(\widetilde{C}_{\varphi_{a,i}})(w_0)>0$.  If convexity of the Berezin range is assumed, then there exists $u_0\in \mathbb{D}$ so that $\widetilde{C}_{\varphi_{a,i}}(u_0)=0$.  However, this is a contradiction, since such a $u_0$ must be in the unit circle.  Now it remains to show the boundary case, namely if $a=\frac{\sqrt{2}}{2}$.

Let $a=\frac{\sqrt{2}}{2}$, by Lemma \ref{ReImBerezinTransform}, we have $\mathrm{Im}(\widetilde{C}_{\varphi_{a,i}})(0,\theta)=\mathrm{Im}(\widetilde{C}_{\varphi_{a,i}})(1,\theta)=0$. Solving for the critical values, $\dfrac{\partial}{\partial r}\mathrm{Im}(\widetilde{C}_{\varphi_{a,i}})(r,\theta)=\dfrac{\partial}{\partial \theta}\mathrm{Im}(\widetilde{C}_{\varphi_{a,i}})(r,\theta)=0$ gives $s_1(r,\theta)=(0.4442,0.8168)$, and $s_2(r,\theta)=(0.8168,1.3577)$ where $\mathrm{Im}(\widetilde{C}_{\varphi_{a,i}})(s_1)>0$ and $\mathrm{Im}(\widetilde{C}_{\varphi_{a,i}})(s_2)<0$ indicating global maximum and minimum at $s_1$ and $s_2$, respectively. 
By convexity of the Berezin range, the line $L:= \{(\mathrm{Re}(s_1),\mathrm{Im}(s_1))t\ |\ 0 \leq t \leq 1\} \subset \overline{B(C_{\varphi_{a,i}})}$. Let $\varepsilon >0$ and consider $z \in (1-\varepsilon)\mathbb{T}$, \[G(t)=C\left((1-\varepsilon)e^{it}\right)|_{\varepsilon=0}=1-\sqrt{2}\cos(t)+\dfrac{1}{2}\cos(2t).\]
We compute $G'(t)=\sin(t)\left(\sqrt{2}-2\cos(t)\right)=0$ to get $t=0,\frac{\pi}{4},\pi, \frac{7\pi}{4}$, and $2\pi$. We can verify that $G(0)=G(2\pi)>0$ and $G(\pi)>0$, and $G\left(\frac{\pi}{4}\right)=G\left(\frac{7\pi}{4}\right)=0$. We see that $C((1-\varepsilon)e^{i\frac{\pi}{4}})=C((1-\varepsilon)e^{i\frac{7\pi}{4}})=\varepsilon$, then by the uniform continuity of $C\left((1-\varepsilon)e^{it}\right)$ in $\varepsilon$, we have that
$C\left((1-\varepsilon)e^{it}\right) \geq \varepsilon > 0$ for all $\varepsilon$ sufficiently small. Hence there exists $w \in (1-\varepsilon)\mathbb{T}$ such that $f(w) < 0$ is the global maximum, contradicting our convexity assumption that $L \subset \overline{B(C_{\varphi_{a,i}})}$.
\end{proof}
By Corollary \ref{propconvex}, we have the following result.
\begin{corollary}
The Berezin range of 
$C_{\varphi_{a,-i}}$ on $A^2(\mathbb{D})$ is convex if and only if $a=0$.
\end{corollary}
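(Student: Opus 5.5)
This follows directly from Theorem \ref{mainresultonedimension3} combined with Corollary \ref{propconvex}. Since $\overline{i}=-i$, Corollary \ref{propconvex} with $\xi=i$ says that $B(C_{\varphi_{a,-i}})$ is convex if and only if $B(C_{\varphi_{a,i}})$ is convex. Theorem \ref{mainresultonedimension3} says the latter holds exactly when $a=0$. Chaining the two equivalences gives the corollary.

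For a self-contained argument, one can check each direction directly. When $a=0$, the symbol is $\varphi_{0,-i}(z)=iz$. Then Lemma \ref{BerezinRangeComposition} gives
\[\widetilde{C}_{\varphi_{0,-i}}(z)=\left(\frac{1-|z|^2}{1-i|z|^2}\right)^2,\]
which is the same as the expression obtained for $\xi=i$ with the denominator conjugated. So the range is the complex conjugate of $B(C_{\varphi_{0,i}})$, and this is an instance of Proposition \ref{conjugateberezin}.

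For $a\neq 0$, I would not redo the case analysis of Theorem \ref{mainresultonedimension3}. Instead I would use Proposition \ref{conjugateberezin}: $B(C_{\varphi_{a,-i}})=\overline{B(C_{\varphi_{a,i}})}$. Complex conjugation is a real-linear bijection of $\mathbb{C}$, so it preserves and reflects convexity. Non-convexity therefore transfers from the $\xi=i$ case.

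There is essentially no obstacle, since all the analytic work is contained in Theorem \ref{mainresultonedimension3}. The only point to check is that Proposition \ref{conjugateberezin} gives equality of the sets, not just inclusion. It does, because the map $z=re^{i\theta}\mapsto w=re^{i(2\psi-\theta)}$ used there is a bijection of $\mathbb{D}$. The case $a=0$, where the proof took $a$ nonzero, is handled by the direct computation above.
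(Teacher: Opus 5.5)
Your main argument matches the paper's own one-line derivation from Theorem \ref{mainresultonedimension3} and Corollary \ref{propconvex}. Your side remarks are also correct: Proposition \ref{conjugateberezin} gives equality of sets because the reflection $z\mapsto re^{i(2\psi-\theta)}$ is a bijection of $\mathbb{D}$, and conjugation preserves convexity. So the ``only if'' direction transfers from $\xi=i$ to $\xi=-i$, taking at face value the non-convexity argument for $0<a<1$ in the proof of Theorem \ref{mainresultonedimension3}.

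The gap is the ``if'' direction, and it cannot be repaired because the claim is false at $a=0$. The proof of Theorem \ref{mainresultonedimension3} only treats $0<a<1$, so nothing in the paper shows convexity at $a=0$. Your ``self-contained'' check does not supply it either. It only shows $B(C_{\varphi_{0,-i}})=\overline{B(C_{\varphi_{0,i}})}$, which sends the question back to the same unproved case.

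Your own formula actually refutes the claim. Write $\widetilde{C}_{\varphi_{0,-i}}(z)=\gamma(|z|^2)$ with $\gamma(t)=\left(\frac{1-t}{1-it}\right)^2$. Then $B(C_{\varphi_{0,-i}})=\gamma([0,1))$ is a smooth arc, with modulus $\frac{(1-t)^2}{1+t^2}$ and argument $2\arctan t$. So it has empty interior, and a convex subset of $\mathbb{C}$ with empty interior lies in a line $L$. That line would contain $\gamma(0)=1$ and, being closed, $\lim_{t\to 1^-}\gamma(t)=0$, so $L=\mathbb{R}$. But $\gamma(1/2)=\frac{1}{3-4i}=\frac{3+4i}{25}\notin\mathbb{R}$.

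Hence $B(C_{\varphi_{0,-i}})$ is not convex, and likewise for $\xi=i$. What your chain of equivalences really gives is that $B(C_{\varphi_{a,-i}})$ is never convex; the ``only if'' direction holds only vacuously.

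The error likely comes from the case $\xi=-1$, where $\varphi_{0,-1}$ is the identity and the range is $\{1\}$. For $\xi=\pm i$, $\varphi_{0,\xi}(z)=-\xi z$ is a nontrivial rotation, and its Berezin transform is not constant.
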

\section{\texorpdfstring{The Unit Ball in $\mathbb{C}^n$}{The Unit Ball in ℂⁿ}}
Before we prove our results, we introduce some basic notation. 
\begin{definition}
    Let $\B_{n}$ denote the unit ball in $\C^{n}$. Then the Bergman kernel for $A^{2}(\B_{n})$ is given as
    \begin{equation*}
        K_{\B_{n}}(w,z) = \frac{n!}{\pi^{n}(1-\langle w,z \rangle)^{n+1}}.
    \end{equation*}
\end{definition}
\begin{definition}
    \label{1}
    Let $a \in \B_{n}$, $P_{a}$ is the orthogonal projection of $\C^{n}$ onto span$(a)$, and let $Q_{a} = I - P_{a}$ then for $a \neq 0$
    \begin{equation*}
        P_{a}(z) = \frac{\langle z,a \rangle}{\|a\|^{2}}a.
    \end{equation*}
We note that $P_{0} \equiv 0$. Let $s_{a} = (1 - \|a\|^{2})^{\frac{1}{2}}$ and define,
    \begin{equation*}
        \varphi_{a,I}(z) = \frac{a - P_{a}(z) - s_{a}Q_{a}(z)}{1 - \langle z,a \rangle }.
    \end{equation*}
    Then $\varphi_{a,I}$ is an automorphism of $\B_{n}$.
\end{definition}
\begin{theorem}\cite{Rudin80}
Every automorphism $\varphi_{a,U}$ of $\mathbb{B}_n$ is of the form 
\[\varphi_{a,U}=U\varphi_{a,I},\]
where $U$ is a unitary transformation of $\mathbb{C}^n$.
\end{theorem}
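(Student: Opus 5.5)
The plan is the standard transitivity-plus-stabilizer argument: use the maps $\varphi_{a,I}$ to move any point to the origin, and show that automorphisms fixing the origin are unitary. Let $\psi$ be an arbitrary automorphism of $\B_n$. I would show $\psi=U\varphi_{a,I}$ for the choice $a=\psi^{-1}(0)$ and a suitable unitary $U$.

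\textbf{Step 1 (properties of $\varphi_{a,I}$).} First I would verify three facts, writing $\varphi_a=\varphi_{a,I}$:
\begin{enumerate}
\item $\varphi_a(0)=a$ and $\varphi_a(a)=0$.
\item The identity
\[
1-\langle \varphi_a(z),\varphi_a(w)\rangle=\frac{(1-\|a\|^2)(1-\langle z,w\rangle)}{(1-\langle z,a\rangle)(1-\langle a,w\rangle)}
\]
holds for $z,w\in\B_n$.
\item $\varphi_a\circ\varphi_a=\mathrm{id}$.
\end{enumerate}
Fact (1) is immediate from the definition. Fact (2) follows by decomposing $z$ and $w$ along $a$ and $a^{\perp}$ via $P_a$ and $Q_a$, using $\langle P_az,Q_aw\rangle=0$ and $s_a^2=1-\|a\|^2$. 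Taking $w=z$ in (2) shows $\|\varphi_a(z)\|<1$ whenever $\|z\|<1$, so $\varphi_a$ maps $\B_n$ into itself. Fact (3) is a direct computation with the same decomposition. Together these make $\varphi_a$ a holomorphic involutive automorphism interchanging $0$ and $a$, which is the content of Definition~\ref{1}.

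\textbf{Step 2 (reduction to the stabilizer of $0$).} Set $a=\psi^{-1}(0)$ and $F=\psi\circ\varphi_a$. Then $F$ is an automorphism of $\B_n$ with $F(0)=\psi(a)=0$. If $F=U$ is unitary, then by (3) we get $\psi=U\circ\varphi_a^{-1}=U\varphi_{a,I}$, which is the claim. When $a=0$ we have $\varphi_0=-\mathrm{id}$, which is itself unitary, so this case is consistent with the formula.

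\textbf{Step 3 (automorphisms fixing $0$ are unitary).} This is the main step; it is Cartan's theorem.
\begin{itemize}
\item \emph{Norm preservation.} The Schwarz lemma on the ball, applied by restricting to complex lines through $0$, gives $\|F(z)\|\le\|z\|$. Applying it to $F^{-1}$ gives the reverse inequality, so $\|F(z)\|=\|z\|$.
\item \emph{Linearity.} For $\theta\in\mathbb{R}$, define $G(z)=F^{-1}\bigl(e^{-i\theta}F(e^{i\theta}z)\bigr)$. Then $G$ is an automorphism with $G(0)=0$ and $G'(0)=I$. Cartan's uniqueness theorem gives $G=\mathrm{id}$, so $F(e^{i\theta}z)=e^{i\theta}F(z)$ for all $\theta$.
\item \emph{Conclusion.} Comparing homogeneous terms in the power series of $F$ at $0$, only the degree-one term survives, so $F$ is linear. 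A norm-preserving linear map of $\C^n$ is unitary.
\end{itemize}

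The uniqueness statement needs its own short proof. Suppose $G(z)=z+G_k(z)+\cdots$ maps $\B_n$ to itself, where $G_k$ is the lowest nonlinear homogeneous term. The iterates $G^{(m)}$ stay bounded on $\B_n$ and have $m\,G_k$ as their degree-$k$ term. Cauchy estimates on the iterates then force $G_k=0$, so $G=\mathrm{id}$.

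I expect the algebra in Step 1 to be the messiest part, though it is routine. Step 3 carries the real content; I would present it via the Schwarz lemma together with this iteration argument.
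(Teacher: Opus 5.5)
Your proof is correct. The paper gives no argument of its own and only cites Rudin, and what you have written is Rudin's proof: the identity for $1-\langle\varphi_a(z),\varphi_a(w)\rangle$ together with $\varphi_a\circ\varphi_a=\mathrm{id}$, the reduction $F=\psi\circ\varphi_a$ to the stabilizer of $0$, and Cartan's theorem via $z\mapsto F^{-1}(e^{-i\theta}F(e^{i\theta}z))$ and the iterate/Cauchy-estimate uniqueness lemma (the Schwarz-lemma norm step is harmless but unnecessary, since a linear map of $\B_n$ onto itself is automatically an isometry). Your handling of $a=0$ is apt: the paper is inconsistent there, since Definition~\ref{1} with $P_0\equiv 0$ gives $\varphi_{0,I}=-\mathrm{id}$ while the introduction sets $\varphi_{0,I}=\mathrm{id}$, but either choice is unitary, so the statement is unaffected.
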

\begin{remark}\label{remarkunitary}
It is well known that unitary automorphisms act transitively on the sphere.
\end{remark}
\subsection{Automorphisms that send 0 to a} \mbox{} \\

Consider the composition operator $C_{\varphi_{a,I}}$ with symbol $\varphi_{a,I}(z)= \dfrac{a - P_{a}(z) - s_{a}Q_{a}(z)}{1 - \langle z,a \rangle }$.
\begin{theorem}\label{Th: Blaschke type symbol}
    The Berezin transform of the composition operator $C_{\varphi_{a,I}}$ is of the following form.
    \begin{equation*}
        \widetilde{C}_{\varphi_{a,I}}(z) =
        \begin{cases}
            \left[ \dfrac{(1 - \|z\|^{2})\|a\|^{2}}{\|a\|^{2}(1-2\mathrm{Re}\langle z,a \rangle) + (1-s_{a})|\langle z,a \rangle|^2 + s_{a}\|a\|^2\|z\|^2}\right]^{n+1}(1 - \langle z,a \rangle)^{n+1}, & a \neq 0 \\
            1, & a = 0
        \end{cases}
    \end{equation*}
    where $s_{a} = (1 - \|a\|^{2})^{\frac{1}{2}}$.
\end{theorem}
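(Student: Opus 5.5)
We apply Lemma \ref{BerezinRangeComposition} with the ball kernel $K_{\B_n}(w,z)=\frac{n!}{\pi^n(1-\langle w,z\rangle)^{n+1}}$, writing $K_z(w)=K_{\B_n}(w,z)$, so that $K_z(w)$ is holomorphic in $w$. The lemma gives
\[
\widetilde{C}_{\varphi_{a,I}}(z)=\frac{K_z(\varphi_{a,I}(z))}{K_z(z)}=\left[\frac{1-\|z\|^2}{1-\langle \varphi_{a,I}(z),z\rangle}\right]^{n+1}.
\]
The case $a=0$ needs no work. There $\varphi_{0,I}(z)=-s_0Q_0(z)=-z$, since $P_0\equiv 0$ and $s_0=1$. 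The formula then gives $\widetilde{C}(z)=\left[\frac{1-\|z\|^2}{1+\|z\|^2}\right]^{n+1}$, which does not match the stated value $1$. I would therefore read the convention $\varphi_{0,U}=U$ from the introduction as governing: $\varphi_{0,I}=I$, the operator is the identity, and $\widetilde{C}\equiv 1$. I would point this convention out explicitly when writing the proof.

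For $a\neq 0$ the task is to compute $\langle \varphi_{a,I}(z),z\rangle$. Using linearity in the first slot and $\langle P_a z,z\rangle=\frac{|\langle z,a\rangle|^2}{\|a\|^2}$, $\langle Q_az,z\rangle=\|z\|^2-\frac{|\langle z,a\rangle|^2}{\|a\|^2}$, $\langle a,z\rangle=\overline{\langle z,a\rangle}$, we get
\[
\langle\varphi_{a,I}(z),z\rangle=\frac{\overline{\langle z,a\rangle}-\frac{|\langle z,a\rangle|^2}{\|a\|^2}-s_a\|z\|^2+s_a\frac{|\langle z,a\rangle|^2}{\|a\|^2}}{1-\langle z,a\rangle}.
\]
Next multiply the numerator and denominator of $\frac{1}{1-\langle\varphi_{a,I}(z),z\rangle}$ by $\|a\|^2(1-\langle z,a\rangle)$. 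The new denominator is
\[
\|a\|^2\bigl(1-\langle z,a\rangle-\overline{\langle z,a\rangle}\bigr)+(1-s_a)|\langle z,a\rangle|^2+s_a\|a\|^2\|z\|^2 ,
\]
and the first term equals $\|a\|^2(1-2\mathrm{Re}\langle z,a\rangle)$. This denominator is real, and it is exactly the one in the statement. The factor $\|a\|^2(1-\langle z,a\rangle)$ that ends up in the numerator supplies $\|a\|^2$ inside the bracket together with $(1-\langle z,a\rangle)$. Raising to the power $n+1$ and pulling $(1-\langle z,a\rangle)^{n+1}$ outside the bracket gives the claimed formula.

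The main obstacle is bookkeeping of conjugates. Because the inner product is conjugate-linear in the second slot, $\langle a,z\rangle=\overline{\langle z,a\rangle}$, and combining it with $-\langle z,a\rangle$ is what produces the real term $-2\mathrm{Re}\langle z,a\rangle$. The denominator of $\varphi_{a,I}$ carries $\langle z,a\rangle$, not its conjugate, so the leftover factor $(1-\langle z,a\rangle)^{n+1}$ is complex. For the same reason, the formula implicitly needs $1-\langle\varphi_{a,I}(z),z\rangle\neq 0$, which holds because both points lie in $\B_n$.
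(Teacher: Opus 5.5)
Your proof is correct and follows the paper's argument. You apply Lemma \ref{BerezinRangeComposition} with the ball kernel, expand $\langle\varphi_{a,I}(z),z\rangle$, and clear denominators with $\|a\|^2(1-\langle z,a\rangle)$; the paper expands coordinatewise where you use $\langle P_az,z\rangle$ and $\langle Q_az,z\rangle$, which is only cosmetic. Your reading of the $a=0$ case also matches the paper, which takes $\varphi_{0,I}(z)_j=z_j$ (the introduction's convention $\varphi_{0,U}=U$) rather than the value $-z$ that the Section 3 formula with $P_0\equiv 0$ would literally give.
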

\begin{proof}
    Assume that $a \neq 0$, then for any $j \in \{1,2,\dots,n\},$
    \begin{align*}
        \varphi_{a,I}(z)_{j} &= \frac{a_{j} - \frac{\langle z,a \rangle}{\|a\|^{2}}a_{j} - s_{a}z_{j} + s_{a}\frac{\langle z,a \rangle}{\|a\|^{2}}a_{j}}{1 - \langle z,a \rangle} \\
        &= \frac{\|a\|^{2} - \langle z,a \rangle + s_{a}\langle z,a \rangle}{\|a\|^{2}(1 - \langle z,a \rangle)}a_{j} - \frac{s_{a}}{1 - \langle z,a \rangle}z_{j}
    \end{align*}
    Further since if $a = 0$ we have for every $j \in \{1,2,\dots,n\},$
    \begin{equation*}
        \varphi_{a,I}(z)_{j} = z_{j}.
    \end{equation*}
    Thus, for $\varphi_{a,I}(z) = (\varphi_{a,I}(z)_{1}, \varphi_{a,I}(z)_{2}, \dots ,\varphi_{a,I}(z)_{n})$
    \begin{equation*}
        \varphi_{a,I}(z)_{j} =
        \begin{cases}
            \frac{\|a\|^{2} - \langle z,a \rangle + s_{a}\langle z,a \rangle}{\|a\|^{2}(1 - \langle z,a \rangle)}a_{j} - \frac{s_{a}}{1 - \langle z,a \rangle}z_{j}, & a \neq 0 \\
            z_{j}, & a = 0
        \end{cases}
        .
    \end{equation*}
    Now we will calculate $K_{\B_{n}}(\varphi_{a,I}(z),z)$. Assume that $a \neq 0$, then
    \begin{align*}
        K_{\B_{n}}(\varphi_{a,I}(z),z) 
                                  &= \frac{n!}{\pi^{n}(1-\langle \varphi_{a,I}(z),z \rangle)^{n+1}} \\
                                  &= \frac{n!}{\pi^{n}}\left[1- \sum_{j=1}^{n}\left(\frac{\|a\|^{2} - \langle z,a \rangle + s_{a}\langle z,a \rangle}{\|a\|^{2}(1 - \langle z,a \rangle)}a_{j}\overline{z_{j}} - \frac{s_{a}}{1 - \langle z,a \rangle}|z_{j}|^{2}\right)\right]^{-(n+1)} \\
                                  &= \frac{n!}{\pi^{n}}\left[1-\left(\frac{\|a\|^{2} - \langle z,a \rangle + s_{a}\langle z,a \rangle}{\|a\|^{2}(1 - \langle z,a \rangle)}\right) \sum_{j=1}^{n}a_{j}\overline{z_{j}} + \left(\frac{s_{a}}{1 - \langle z,a \rangle}\right)\sum_{j=1}^{n}|z_{j}|^{2}\right]^{-(n+1)} \\
                                  &= \frac{n!}{\pi^{n}}\left[1-\left(\frac{\|a\|^{2} - \langle z,a \rangle + s_{a}\langle z,a \rangle}{\|a\|^{2}(1 - \langle z,a \rangle)}\right) \langle a,z \rangle + \left(\frac{s_{a}}{1 - \langle z,a \rangle}\right)\|z\|^2\right]^{-(n+1)} \\
                                  &= \frac{n!}{\pi^{n}}\left[1-\left(\frac{\overline{\langle z,a \rangle}\|a\|^{2} - |\langle z,a \rangle|^2 + s_{a}|\langle z,a \rangle|^2}{\|a\|^{2}(1 - \langle z,a \rangle)}\right) + \frac{s_{a}\|z\|^2}{1 - \langle z,a \rangle}\right]^{-(n+1)} \\
                                  &= \frac{n!}{\pi^{n}}\left[\dfrac{\|a\|^{2}(1 - \langle z,a \rangle)}{\|a\|^{2}(1 - \langle z,a \rangle)}-\left(\frac{\overline{\langle z,a \rangle}\|a\|^{2} - |\langle z,a \rangle|^2 + s_{a}|\langle z,a \rangle|^2}{\|a\|^{2}(1 - \langle z,a \rangle)}\right) + \frac{s_{a}\|z\|^2}{1 - \langle z,a \rangle}\right]^{-(n+1)} \\
                                  &= \frac{n!}{\pi^{n}}\left[\frac{\|a\|^{2}(1-2\mathrm{Re}\langle z,a \rangle) + |\langle z,a \rangle|^2 - s_{a}|\langle z,a \rangle|^2 + s_{a}\|a\|^2\|z\|^2}{\|a\|^{2}(1 - \langle z,a \rangle)}\right]^{-(n+1)} \\
                                  &= \frac{n!}{\pi^{n}}\left[\frac{\|a\|^{2}(1 - \langle z,a \rangle)}{\|a\|^{2}(1-2\mathrm{Re}\langle z,a \rangle) + |\langle z,a \rangle|^2 - s_{a}|\langle z,a \rangle|^2 + s_{a}\|a\|^2\|z\|^2}\right]^{(n+1)}
    \end{align*}
    Thus,
    \begin{equation*}
        K_{\B_{n}}(\varphi_{a,I}(z),z) = 
        \begin{cases}
            \frac{n!}{\pi^{n}}\left[\frac{\|a\|^{2}(1 - \langle z,a \rangle)}{\|a\|^{2}(1-2\mathrm{Re}\langle z,a \rangle) + |\langle z,a \rangle|^2 - s_{a}|\langle z,a \rangle|^2 + s_{a}\|a\|^2\|z\|^2}\right]^{(n+1)}, & a \neq 0 \\
            \frac{n!}{\pi^{n}(1-\|z\|^{2})^{n+1}}, & a = 0
        \end{cases}
        .
    \end{equation*}
    But this means that by using Lemma \ref{BerezinRangeComposition} we have,
        \begin{align*}
        \widetilde{C}_{\varphi_{a,I}}(z) &= 
        \begin{cases}
            \left[\frac{(1 - \|z\|^{2})\|a\|^{2}(1 - \langle z,a \rangle)}{\|a\|^{2}(1-2\mathrm{Re}\langle z,a \rangle) + |\langle z,a \rangle|^2 - s_{a}|\langle z,a \rangle|^2 + s_{a}\|a\|^2\|z\|^2}\right]^{n+1}, & a \neq 0 \\
            1, & a = 0
        \end{cases} \\
        &= 
        \begin{cases}
            \left[\frac{(1 - \|z\|^{2})\|a\|^{2}}{\|a\|^{2}(1-2\mathrm{Re}\langle z,a \rangle) + (1-s_{a})|\langle z,a \rangle|^2 + s_{a}\|a\|^2\|z\|^2}\right]^{n+1}(1 - \langle z,a \rangle)^{n+1}, & a \neq 0 \\
            1 ,& a = 0
        \end{cases}
        .
    \end{align*}
\end{proof}
\begin{theorem}\label{RealImaginary}
    The real and imaginary parts of the Berezin transform of the composition operator $C_{\varphi_{a,I}}$ are of the form,
    \begin{align*}
        \mathrm{Re}( \widetilde{C}_{\varphi_{a,I}}(z)) &=
        \begin{cases}
         \Gamma 
        \displaystyle\sum_{\substack{k=0 \\ \mathrm{k\text{ even}}}}^{n+1} {n + 1 \choose k} (-1)^{\frac{3k}{2}}[1 - \mathrm{Re}\langle z,a \rangle]^{n+1-k}  [\mathrm{Im}\langle z,a \rangle]^{k}, & a \neq 0\\
        1, & a = 0   
        \end{cases}
        \\
        \mathrm{Im}( \widetilde{C}_{\varphi_{a,I}}(z)) &=
        \begin{cases}
        \Gamma
        \displaystyle\sum_{\substack{k=0 \\ \mathrm{k\text{ odd}}}}^{n+1} {n + 1 \choose k} (-1)^{\frac{3k-1}{2}}[1 - \mathrm{Re}\langle z,a \rangle]^{n+1-k}  [\mathrm{Im}\langle z,a \rangle]^{k}, & a \neq 0\\
        0, & a = 0    
        \end{cases}     
    \end{align*}
     where
  \[\Gamma = \left[\frac{(1 - \|z\|^{2})\|a\|^{2}}{\|a\|^{2}(1-2\mathrm{Re}\langle z,a \rangle) + (1 - s_{a})|\langle z,a \rangle|^2 + s_{a}\|a\|^2\|z\|^2}\right]^{n+1}.\]
\end{theorem}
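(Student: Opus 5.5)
The starting point is Theorem \ref{Th: Blaschke type symbol}. For $a\neq 0$ it gives $\widetilde{C}_{\varphi_{a,I}}(z)=\Gamma\,(1-\langle z,a\rangle)^{n+1}$, where $\Gamma$ is the bracketed quantity raised to the power $n+1$. The case $a=0$ is immediate: the transform is identically $1$, so its real part is $1$ and its imaginary part is $0$.

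For $a\neq 0$ the first step is to check that $\Gamma$ is real, so that splitting into real and imaginary parts only involves the factor $(1-\langle z,a\rangle)^{n+1}$. The numerator $(1-\|z\|^2)\|a\|^2$ is real and positive. The denominator $\|a\|^2(1-2\mathrm{Re}\langle z,a\rangle)+(1-s_a)|\langle z,a\rangle|^2+s_a\|a\|^2\|z\|^2$ is real by inspection. It is also nonzero: from the computation in the proof of Theorem \ref{Th: Blaschke type symbol}, it equals $\|a\|^2(1-\langle z,a\rangle)\bigl(1-\langle\varphi_{a,I}(z),z\rangle\bigr)$ up to modulus. Both factors are nonzero because $\varphi_{a,I}(z)$ and $z$ lie in $\B_n$, so $|\langle\varphi_{a,I}(z),z\rangle|<1$, and likewise $|\langle z,a\rangle|<1$. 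Hence $\Gamma\in\mathbb{R}$, and in fact $\Gamma>0$.

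Next, write $1-\langle z,a\rangle = x - iy$ with $x=1-\mathrm{Re}\langle z,a\rangle$ and $y=\mathrm{Im}\langle z,a\rangle$. The binomial theorem gives
\[(x-iy)^{n+1}=\sum_{k=0}^{n+1}{n+1\choose k}x^{n+1-k}(-i)^k y^k .\]
The remaining step is to identify $(-i)^k$ with the signs in the statement. Since $-i=e^{3\pi i/2}$, we have $(-i)^k=i^{3k}$. For $k$ even this equals $(i^2)^{3k/2}=(-1)^{3k/2}$, which is real. For $k$ odd it equals $i\cdot i^{3k-1}=i\,(-1)^{(3k-1)/2}$, which is purely imaginary. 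The exponents $3k/2$ and $(3k-1)/2$ are integers in the respective cases, so these sign formulas make sense.

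Collecting the even-$k$ terms gives the real part, and the odd-$k$ terms (after dropping the factor $i$) give the imaginary part. Multiplying by the real number $\Gamma$ then yields the displayed formulas.

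The only step needing any care is confirming that $\Gamma$ is real and well defined, which is why I would cite the identity from the proof of Theorem \ref{Th: Blaschke type symbol}. Everything else is a bookkeeping check of the powers of $-i$.
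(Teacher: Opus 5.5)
Your proposal is correct and follows the paper's proof. Both arguments note that $\Gamma$ is real, so only $(1-\langle z,a\rangle)^{n+1}$ carries the complex part, then expand by the binomial theorem and sort the terms by the parity of $k$ (you via $(-i)^k=i^{3k}$). You additionally supply details the paper leaves as ``simple computations,'' notably that the denominator of $\Gamma$ is nonzero via the identity from the proof of Theorem~\ref{Th: Blaschke type symbol}; that identity is an exact equality, not merely ``up to modulus.'' Your aside that $\Gamma>0$ is not needed for this statement and is asserted without proof; it does hold, since the denominator is real, continuous and nonvanishing on the connected ball and equals $\|a\|^2>0$ at $z=0$.
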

\begin{proof}
    Since the only complex valued part of $\widetilde{C}_{\varphi_{a,I}}(z)$ is $(1 - \mathrm{Re}(\langle a,z \rangle) - i\mathrm{Im}(\langle a,z \rangle))^{n+1}$. 
    Apply binomial formula to get,
    \begin{equation*}
        \widetilde{C}_{\varphi_{a,I}}(z) =
        \begin{cases}
        \Gamma \displaystyle\sum_{k=0}^{n+1} {n + 1 \choose k}[1 - \mathrm{Re}\langle z,a \rangle]^{n+1-k}  [-i\mathrm{Im}\langle z,a \rangle]^{k}, & a \neq 0\\
        1, & a = 0   
        \end{cases}
        .
    \end{equation*}
    After some simple computations we arrive at the stated equations.
\end{proof}
\begin{proposition}\label{propconj1}
The Berezin range of $C_{\varphi_{a,I}}$ on $A^2(\B_n)$ is closed under complex conjugation.
\end{proposition}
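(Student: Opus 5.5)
For each $z\in\B_n$ we will produce a point $w\in\B_n$ with $\widetilde{C}_{\varphi_{a,I}}(w)=\overline{\widetilde{C}_{\varphi_{a,I}}(z)}$. This is the same strategy as in Proposition \ref{conjugateberezin}, where we reflected $z$ across the line through $a$. The case $a=0$ is trivial, because then the range is $\{1\}$. So assume $a\neq 0$.

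By Theorem \ref{Th: Blaschke type symbol}, $\widetilde{C}_{\varphi_{a,I}}(z)$ depends on $z$ only through $\|z\|^2$ and $\langle z,a\rangle$. The prefactor $\Gamma$ of Theorem \ref{RealImaginary} is real and involves only $\|z\|^2$, $\mathrm{Re}\langle z,a\rangle$ and $|\langle z,a\rangle|^2$. The remaining factor is $(1-\langle z,a\rangle)^{n+1}$. It therefore suffices to find $w$ with
\[
\|w\|=\|z\| \quad\text{and}\quad \langle w,a\rangle=\overline{\langle z,a\rangle}.
\]
Then $\Gamma(w)=\Gamma(z)$ and $(1-\langle w,a\rangle)^{n+1}=\overline{(1-\langle z,a\rangle)^{n+1}}$. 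Equivalently, Theorem \ref{RealImaginary} shows that the real part is unchanged and the imaginary part, which is odd in $\mathrm{Im}\langle z,a\rangle$, changes sign.

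To construct $w$, decompose $z=P_a(z)+Q_a(z)=\lambda a+Q_a(z)$, where $\lambda=\langle z,a\rangle/\|a\|^2$. Set
\[
w=\overline{\lambda}\,a+Q_a(z).
\]
Since $Q_a(z)\perp a$, we get $\langle w,a\rangle=\overline{\lambda}\|a\|^2=\overline{\langle z,a\rangle}$. Also $\|w\|^2=|\lambda|^2\|a\|^2+\|Q_a(z)\|^2=\|z\|^2$, so $w\in\B_n$. Substituting into the formula of Theorem \ref{Th: Blaschke type symbol} gives $\widetilde{C}_{\varphi_{a,I}}(w)=\overline{\widetilde{C}_{\varphi_{a,I}}(z)}$.

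Because $z$ was arbitrary, $\overline{B(C_{\varphi_{a,I}})}\subseteq B(C_{\varphi_{a,I}})$, and applying conjugation again gives equality.

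The only place needing care is checking that the denominator in $\Gamma$ uses only the modulus and real part of $\langle z,a\rangle$, so that it is invariant under this substitution. This can be read off directly from Theorem \ref{Th: Blaschke type symbol}, and it is not a real obstacle.
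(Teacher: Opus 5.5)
Your proof is correct and follows the paper's approach: both reduce the claim to finding $w\in\B_n$ with $\|w\|=\|z\|$ and $\langle w,a\rangle=\overline{\langle z,a\rangle}$, and then substitute into the formula of Theorem \ref{Th: Blaschke type symbol}. The only difference is the choice of $w$. The paper reflects coordinatewise, $w_k=e^{2i\psi_k}\overline{z_k}$ where $a_k=\rho_k e^{i\psi_k}$, whereas you conjugate only the $P_a$-component, $w=\overline{\lambda}a+Q_a(z)$; your choice is coordinate-free and also handles $a=0$ explicitly.
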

\begin{proof}
Let $z=(r_1e^{i\theta_1},\dots,r_ne^{i\theta_n})$ and nonzero $a=(\rho_1e^{i\psi_1},\dots,\rho_ne^{i\psi_n})$. We want to show that there exists $w \in \B_n$ such that 
\[\widetilde{C}_{\varphi_{a,I}}(z)= \overline{\widetilde{C}_{\varphi_{a,I}}(w)}.\]
Our candidate is $w=(r_1e^{i(2\psi_1-\theta_1)},\dots,r_ne^{i(2\psi_n-\theta_n)}) \in \B_n$. We observe
\begin{align*}
\langle w,a \rangle =& \sum_{k=1}^n\left(r_ke^{i(2\psi_k-\theta_k)}\right)\left(\rho_ke^{-i\psi_k} \right) \\
=& \sum_{k=1}^n \left(\rho_ke^{i\psi_k} \right)\left(r_ke^{-i\theta_k} \right) \\
=& \langle a,z \rangle = \overline{\langle z,a \rangle}.
\end{align*}
Equivalently, $\overline{\langle w,a \rangle}=\langle z,a \rangle$. Thus, we have
\begin{align*}
\overline{\widetilde{C}_{\varphi_{a,I}}(w)} =& \left[\frac{(1 - \|w\|^{2})\|a\|^{2}}{\|a\|^{2}(1-2\mathrm{Re}\langle w,a \rangle) + (1-s_{a})|\langle w,a \rangle|^2 + s_{a}\|a\|^2\|w\|^2}\right]^{n+1}(1 - \overline{\langle w,a \rangle})^{n+1} \\
=& \left[\frac{(1 - \|z\|^{2})\|a\|^{2}}{\|a\|^{2}(1-2\mathrm{Re}\langle z,a \rangle) + (1-s_{a})|\langle z,a \rangle|^2 + s_{a}\|a\|^2\|z\|^2}\right]^{n+1}(1 - \langle z,a \rangle)^{n+1} \\
=& \widetilde{C}_{\varphi_{a,I}}(z).
\end{align*}
\end{proof}
The following proposition is a several variables analog of Proposition \ref{auto_on_disk} with rotation replaced with action by unitary maps.  Consequently, the proof of the following proposition is very similar to the proof of Proposition \ref{auto_on_disk}.
\begin{proposition}\label{propunitary}
    Let $U$ be a unitary automorphism of $\mathbb{B}_n$.  Then the Berezin range
    $B(C_{\varphi_{Ua,I}})$ on $A^2(\mathbb{B}_n)$ is convex if and only if the Berezin range $B(C_{\varphi_{a,I}})$ on $A^2(\mathbb{B}_n)$ is convex.
\end{proposition}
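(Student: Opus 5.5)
We show that the two Berezin ranges coincide as sets: $B(C_{\varphi_{Ua,I}})=B(C_{\varphi_{a,I}})$. The convexity equivalence then follows at once. The argument mirrors the proof of Proposition \ref{auto_on_disk}, with the rotation $\lambda$ replaced by the unitary $U$.

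If $a=0$, then $Ua=0$ and both Berezin transforms are identically $1$ by Theorem \ref{Th: Blaschke type symbol}, so there is nothing to prove. Assume $a\neq 0$, so that $Ua\neq 0$. Using the explicit formula of Theorem \ref{Th: Blaschke type symbol}, we observe that $\widetilde{C}_{\varphi_{b,I}}(z)$ depends on $z$ and $b$ only through the quantities
\[
\|z\|^2,\qquad \|b\|^2,\qquad s_b=(1-\|b\|^2)^{1/2},\qquad \langle z,b\rangle .
\]
Here $\mathrm{Re}\langle z,b\rangle$ and $|\langle z,b\rangle|^2$ are themselves functions of $\langle z,b\rangle$.

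Since $U$ is unitary, we have $\|Ua\|=\|a\|$ and hence $s_{Ua}=s_a$. For every $z\in\mathbb{B}_n$,
\[
\langle z,Ua\rangle=\langle U^*z,a\rangle \quad\text{and}\quad \|U^*z\|=\|z\|.
\]
Substituting these identities into the formula gives
\[
\widetilde{C}_{\varphi_{Ua,I}}(z)=\widetilde{C}_{\varphi_{a,I}}(U^*z)\qquad (z\in\mathbb{B}_n).
\]

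Finally, $U^*$ is a unitary map, so it maps $\mathbb{B}_n$ bijectively onto itself. Therefore
\[
B(C_{\varphi_{Ua,I}})=\widetilde{C}_{\varphi_{a,I}}(U^*(\mathbb{B}_n))=\widetilde{C}_{\varphi_{a,I}}(\mathbb{B}_n)=B(C_{\varphi_{a,I}}).
\]
Since the two sets are equal, one is convex exactly when the other is.

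There is no real obstacle in this argument. The only point needing care is to check that every term in the formula of Theorem \ref{Th: Blaschke type symbol} is expressed through inner products and norms, so that the substitution $z\mapsto U^*z$ carries the formula for $Ua$ exactly into the formula for $a$.
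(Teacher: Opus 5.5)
Your proposal is correct and matches the paper's proof: both use the explicit formula of Theorem \ref{Th: Blaschke type symbol} to get $\widetilde{C}_{\varphi_{Ua,I}}(z)=\widetilde{C}_{\varphi_{a,I}}(U^*z)$. Both then conclude from the surjectivity of $U^*$ on $\mathbb{B}_n$ that the two Berezin ranges coincide as sets, handling $a=0$ separately.
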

\begin{proof}
Clearly, if $a=0$, then $B(C_{\varphi_{Ua,I}})=B(C_{\varphi_{a,I}})=\{1\}$ is convex.  So
let us assume $a\neq 0$.  Since $U$ is unitary, $U^*$ is also unitary.  Then by Theorem \ref{Th: Blaschke type symbol},
\begin{align*}
\widetilde{C}_{\varphi_{Ua,I}}(z)&=\left[\frac{(1 - \|z\|^{2})\|Ua\|^{2}}{\|Ua\|^{2}(1-2\mathrm{Re}\langle z,Ua \rangle) + (1-s_{Ua})|\langle z,Ua \rangle|^2 + s_{Ua}\|Ua\|^2\|z\|^2}\right]^{n+1}(1 - \langle z,Ua \rangle)^{n+1}\\
&=\left[\frac{(1 - \|U^*z\|^{2})\|a\|^{2}}{\|a\|^{2}(1-2\mathrm{Re}\langle U^*z,a \rangle) + (1-s_{a})|\langle U^*z,a \rangle|^2 + s_{a}\|a\|^2\|U^*z\|^2}\right]^{n+1}(1 - \langle U^*z,a \rangle)^{n+1}\\
&=\widetilde{C}_{\varphi_{a,I}}(U^*z).
\end{align*}
Since $U^*$ is surjective, $\widetilde{C}_{\varphi_{a,I}}(\mathbb{B}_n)=\widetilde{C}_{\varphi_{a,I}}(U^*(\mathbb{B}_n))=\widetilde{C}_{\varphi_{Ua,I}}(\mathbb{B}_n)$ as sets.  Thus we have shown the proposition.
\end{proof}
Let us now prove our main results.
\begin{proof}[Proof of Theorem \ref{Zerotopositive}]
If $a = 0$, then
\begin{equation*}
\widetilde{C}_{\varphi_{a,I}}(z) = 1, \ \forall z \in \B_{n} \implies B(C_{\varphi_{a,I}}) = \{1\}
\end{equation*}
which is clearly convex.  Let us assume that $B(C_{\varphi_{a,I}})$ is convex but that  $a\neq 0$ satisfies \[\|a\|> \tan\left(\frac{\pi}{2(n+1)}\right) .\]  By appealing to Proposition \ref{propunitary}, we may assume that $a$ has real components.  Assuming that \[\|a\|> \tan\left(\frac{\pi}{2(n+1)}\right), \] the idea is to show that there exists $c_0>0$ so that $ic_0\in B(C_{\varphi_{a,I}}) $.  By Proposition \ref{propconj1}, this will imply that
$-ic_0\in B({C}_{\varphi_{a,I}})$.  Hence assuming convexity of $B({C}_{\varphi_{a,I}})$, this will imply that $0\in B({C}_{\varphi_{a,I}})$.  This will produce a contradiction since if $\widetilde{C}_{\varphi_{a,I}}(w_0)=0$ then $\|w_0\|=1$. We let $z=ixa\|a\|^{-1}$ where $x \in (0,1)$ is to be determined.  Then $\mathrm{Re}\langle z,a\rangle=0$.
Define $H(x)=\mathrm{Re}\left(\left(1-ix\|a\|^{-1}\langle a, a\rangle\right)^{n+1}\right)$.  Then for $\Gamma$ defined as in Theorem \ref{RealImaginary},
\[\mathrm{Re}\left(\widetilde{C}_{\varphi_{a,I}}(ixa\|a\|^{-1})\right)=\Gamma H(x).\]  Since $\Gamma>0$ on $\mathbb{B}_n$, it suffices to show that there exists $x_0\in (0,1)$ so that $H(x_0)=0$, implying 
\[i\mathrm{Im}\left(\widetilde{C}_{\varphi_{a,I}}(ix_0a\|a\|^{-1})\right)=\widetilde{C}_{\varphi_{a,I}}(ix_0a\|a\|^{-1}).\]
Clearly, $H$ is continuous on $[0,1]$. Therefore we can apply the intermediate value theorem to $H(x)$. Note that $H(0)>0$.  Then let us compute the following.
\[H(x)=\left(1+x^2\|a\|^2\right)^{\frac{n+1}{2}}\mathrm{Re}\left(e^{-i(n+1) \arctan({x\|a\|})}\right)=\left(1+x^2\|a\|^2\right)^{\frac{n+1}{2}}\cos\left((n+1) \arctan({x\|a\|})\right).\]  Now if \[\frac{\pi}{2}<(n+1)\arctan(x\|a\|)<\pi\]
then $\cos\left((n+1) \arctan({x\|a\|})\right)<0$, which implies that $H(x)<0$.  Let us use the inequality \[\frac{\pi}{2}<(n+1)\arctan(x\|a\|)<\pi\] and our assumption that \[\frac{\tan\left(\frac{\pi}{2\left(n+1\right)}\right)}{\|a\|}<1\] to show that such an $x\in (0,1)$ exists.  We solve for $x$ to get the following.
\[\frac{\tan\left(\frac{\pi}{2\left(n+1\right)}\right)}{\|a\|}<x<\frac{\tan\left(\frac{\pi}{\left(n+1\right)}\right)}{\|a\|}.\]  Therefore, we can define 
\[x_1=\frac{1}{2}\left(\frac{\tan\left(\frac{\pi}{2\left(n+1\right)}\right)}{\|a\|}+\min\left\{1,\frac{\tan\left(\frac{\pi}{\left(n+1\right)}\right)}{\|a\|}\right\}\right). \]
It is clear that $x_1\in (0,1)$ by construction and our assumption that
\[\frac{\tan\left(\frac{\pi}{2\left(n+1\right)}\right)}{\|a\|}<1.\]
Furthermore, $H(x_1)<0$. Thus by the intermediate value theorem, $H(x_0)=0$ for some $x_0\in (0,x_1)$.  That is, \[ic_0=i\mathrm{Im}\left(\widetilde{C}_{\varphi_{a,I}}(ix_0a\|a\|^{-1})\right)=\widetilde{C}_{\varphi_{a,I}}(ix_0a\|a\|^{-1})\] where $c_0>0$.  So by using Proposition \ref{propconj1}, $-ic_0$ is also in the Berezin range. Therefore, by our convexity assumption, the line segment joining $-ic_0$ to $ic_0$ is also contained in the Berezin range.  Thus $0\in B({C}_{\varphi_{a,I}})$.  That is, there exists $w_0\in \mathbb{B}_n$ so that $\widetilde{C}_{\varphi_{a,I}}(w_0)=0$. This implies that $\Gamma=0$ since \[\left(1-\langle w_0,a\rangle\right)^{n+1}\neq 0.\]  This is our contradiction because $\Gamma=0$ implies $\|w_0\|=1$.  Thus if $B({C}_{\varphi_{a,I}})$ is convex, then \[\|a\|\leq \tan\left(\frac{\pi}{2(n+1)}\right). \] This proves our theorem.
\end{proof}
\subsection{Automorphisms that send 0 to -a}  \mbox{} \\
We consider the composition operator $C_{\varphi_{a,-I}}$ with automorphism symbol $\varphi_{a,-I}(z)= -\left(\frac{a-P_a(z)-s_aQ_a(z)}{1-\langle z,a\rangle}\right)$ that sends $0$ to $-a$.
\begin{theorem}\label{Th: Blaschke type symbol 0 to -a}
    The Berezin transform of the composition operator $C_{\varphi_{a,-I}}$ is of the following form.
    \begin{equation*}
        \widetilde{C}_{\varphi_{a,-I}}(z) =
        \begin{cases}
            \left[\dfrac{(1 - \|z\|^{2})\|a\|^{2}(1 - \langle z,a \rangle)}{\|a\|^{2} - (1-s_a)|\langle z,a \rangle|^2 - s_{a}\|a\|^2\|z\|^2-2i\|a\|^{2}\mathrm{Im}\langle z,a \rangle}\right]^{n+1}, & a \neq 0 \\
            1, & a = 0
        \end{cases}
    \end{equation*}
    where $s_{a} = (1 - \|a\|^{2})^{\frac{1}{2}}$.
\end{theorem}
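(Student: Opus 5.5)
The plan is to follow the proof of Theorem \ref{Th: Blaschke type symbol} almost verbatim, since $\varphi_{a,-I}=-\varphi_{a,I}$. The only change is a sign inside the kernel, which turns the real term $-2\mathrm{Re}\langle z,a\rangle$ into the imaginary term $-2i\,\mathrm{Im}\langle z,a\rangle$.

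First I handle $a=0$. Then $\varphi_{0,-I}(z)=-z$, so $K_{\B_n}(-z,z)=\frac{n!}{\pi^n(1+\|z\|^2)^{n+1}}$, and Lemma \ref{BerezinRangeComposition} gives the Berezin transform $\left(\frac{1-\|z\|^2}{1+\|z\|^2}\right)^{n+1}$. This is not identically $1$. So in the case $a=0$ I would read the statement as a convention carried over from Theorem \ref{Th: Blaschke type symbol}; I would flag this and record the correct value $\left(\frac{1-\|z\|^2}{1+\|z\|^2}\right)^{n+1}$, which is real and lies in $(0,1]$.

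For $a\neq 0$, I reuse the componentwise formula for $\varphi_{a,I}(z)_j$ from the proof of Theorem \ref{Th: Blaschke type symbol}, together with the identity obtained there:
\[
\langle \varphi_{a,I}(z),z\rangle=\frac{\overline{\langle z,a\rangle}\|a\|^2-(1-s_a)|\langle z,a\rangle|^2-s_a\|a\|^2\|z\|^2}{\|a\|^2(1-\langle z,a\rangle)}.
\]
Since $\langle\varphi_{a,-I}(z),z\rangle=-\langle\varphi_{a,I}(z),z\rangle$, I get
\[
1-\langle\varphi_{a,-I}(z),z\rangle=\frac{\|a\|^2(1-\langle z,a\rangle)+\|a\|^2\overline{\langle z,a\rangle}-(1-s_a)|\langle z,a\rangle|^2-s_a\|a\|^2\|z\|^2}{\|a\|^2(1-\langle z,a\rangle)}.
\]
Then I use $-\langle z,a\rangle+\overline{\langle z,a\rangle}=-2i\,\mathrm{Im}\langle z,a\rangle$ to collapse the numerator to
\[
\|a\|^2-(1-s_a)|\langle z,a\rangle|^2-s_a\|a\|^2\|z\|^2-2i\|a\|^2\mathrm{Im}\langle z,a\rangle.
\]

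Finally I substitute into $K_{\B_n}(\varphi_{a,-I}(z),z)=\frac{n!}{\pi^n}\bigl(1-\langle\varphi_{a,-I}(z),z\rangle\bigr)^{-(n+1)}$. I divide by $K_{\B_n}(z,z)=\frac{n!}{\pi^n(1-\|z\|^2)^{n+1}}$ as in Lemma \ref{BerezinRangeComposition}, and the constants $\frac{n!}{\pi^n}$ cancel. Inverting the fraction then yields exactly the stated bracket raised to the power $n+1$.

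The main obstacle is not the algebra but making sure the denominator never vanishes on $\B_n$. Here that follows automatically: it equals $\|a\|^2(1-\langle z,a\rangle)\bigl(1-\langle\varphi_{a,-I}(z),z\rangle\bigr)$, and the last factor is nonzero because $|\langle\varphi_{a,-I}(z),z\rangle|<1$ whenever $\varphi_{a,-I}(z),z\in\B_n$. The remaining care is in tracking the conjugations $\langle a,z\rangle=\overline{\langle z,a\rangle}$ correctly.
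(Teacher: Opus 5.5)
Your computation for $a\neq 0$ is correct and follows the route the paper intends. The paper only says the kernel follows ``by a similar method as used in the proof of Theorem~\ref{Th: Blaschke type symbol}''. Reusing the identity for $\langle\varphi_{a,I}(z),z\rangle$, negating it, and collapsing $\overline{\langle z,a\rangle}-\langle z,a\rangle=-2i\,\mathrm{Im}\langle z,a\rangle$ is that computation done efficiently. Your remark that the denominator equals $\|a\|^2(1-\langle z,a\rangle)\bigl(1-\langle\varphi_{a,-I}(z),z\rangle\bigr)\neq 0$ supplies a point the paper leaves implicit.

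On $a=0$ the disagreement is about conventions, not algebra, and the paper's value is not a carried-over convention. Its proof explicitly takes $\varphi_{0,-I}(z)=z$. That is what you get by putting $a=0$, $P_0\equiv 0$, $Q_0=I$, $s_0=1$ into the displayed formula $-\frac{a-P_a(z)-s_aQ_a(z)}{1-\langle z,a\rangle}$. With that reading the value $1$ is correct. It is also the continuous extension of the $a\neq 0$ formula: $(1-s_a)/\|a\|^2=1/(1+s_a)\to\frac12$, so the bracket tends to $(1-\|z\|^2)/(1-\|z\|^2)=1$.

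Your reading $\varphi_{0,-I}=-\mathrm{id}$ comes from the introduction's rule $\varphi_{0,U}=U$. Under that rule your value $\left(\frac{1-\|z\|^2}{1+\|z\|^2}\right)^{n+1}$ is right.

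The paper is in fact inconsistent between the two subsections:
\begin{itemize}
\item In Theorem~\ref{Th: Blaschke type symbol} it uses $\varphi_{0,I}=\mathrm{id}$, following the introduction's rule; the displayed formula would give $-\mathrm{id}$.
\item Here it uses $\varphi_{0,-I}=\mathrm{id}$, following the displayed formula.
\end{itemize}
So under any single convention, exactly one of the two ``$a=0$ gives $1$'' cases fails.

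Rather than calling the statement false, say which convention you adopt; with the one used in this proof the $a=0$ case is immediate. Nothing downstream is affected. Under your reading the range at $a=0$ is $(0,1]$, which is convex, so the ``if'' direction of Theorem~\ref{Zerotonegative} holds either way.
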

\begin{proof}
Let $\varphi_{a,-I}(z) = (\varphi_{a,-I}(z)_{1}, \varphi_{a,-I}(z)_{2}, \dots ,\varphi_{a,-I}(z)_{n})$. Then, 
    \begin{align*}
        \varphi_{a,-I}(z)_{j} &= \frac{-a_{j} + \frac{\langle z,a \rangle}{\|a\|^{2}}a_{j} + s_{a}z_{j} - s_{a}\frac{\langle z,a \rangle}{\|a\|^{2}}a_{j}}{1 - \langle z,a \rangle} \\
        &= \frac{-\|a\|^{2} + \langle z,a \rangle - s_{a}\langle z,a \rangle}{\|a\|^{2}(1 - \langle z,a \rangle)}a_{j} + \frac{s_{a}}{1 - \langle z,a \rangle}z_{j},
    \end{align*}
for $j \in \{1,2,\dots,n\}$. If $a = 0$, we have
\begin{equation*}
    \varphi_{a,-I}(z)_{j} = z_{j}.
\end{equation*}
Thus, for $\varphi_{a,-I}(z) = (\varphi_{a,-I}(z)_{1}, \varphi_{a,-I}(z)_{2}, \dots ,\varphi_{a,-I}(z)_{n})$ 
    \begin{equation*}
        \varphi_{a,-I}(z)_{j} =
        \begin{cases}
            \frac{-\|a\|^{2} + \langle z,a \rangle - s_{a}\langle z,a \rangle}{\|a\|^{2}(1 - \langle z,a \rangle)}a_{j} + \frac{s_{a}}{1 - \langle z,a \rangle}z_{j}, & a \neq 0 \\
            z_{j}, & a = 0
        \end{cases}.
    \end{equation*}
    
Now we will calculate $K_{\B_{n}}(\varphi_{a,-I}(z),z)$. Assume that $a \neq 0$, then by a similar method as used in the proof of Theorem \ref{Th: Blaschke type symbol} we get that,
    \begin{align*}
        K_{\B_{n}}(\varphi_{a,-I}(z),z) = \frac{n!}{\pi^{n}}\left[\frac{\|a\|^{2}(1 - \langle z,a \rangle)}{\|a\|^{2}(1-2i\mathrm{Im}\langle z,a \rangle) - |\langle z,a \rangle|^2 + s_{a}|\langle z,a \rangle|^2 - s_{a}\|a\|^2\|z\|^2}\right]^{(n+1)}
\end{align*}
Assume that $a = 0$, we have 
\[K_{\B_{n}}(\varphi_{a,-I}(z),z) = \frac{n!}{\pi^{n}(1-\|z\|^{2})^{n+1}}.\]
By Lemma \ref{BerezinRangeComposition}, we have
     \begin{align*}
        \widetilde{C}_{\varphi_{a,-I}}(z) &= 
        \begin{cases}
            \left[\frac{(1 - \|z\|^{2})\|a\|^{2}(1 - \langle z,a \rangle)}{\|a\|^{2}(1-2i\mathrm{Im}\langle z,a \rangle) - |\langle z,a \rangle|^2 + s_{a}|\langle z,a \rangle|^2 - s_{a}\|a\|^2\|z\|^2}\right]^{n+1}, & a \neq 0 \\
            1, & a = 0
        \end{cases}
        \\
        &= 
        \begin{cases}
            \left[\frac{(1 - \|z\|^{2})\|a\|^{2}(1 - \langle z,a \rangle)}{\|a\|^{2} - (1-s_a)|\langle z,a \rangle|^2 - s_{a}\|a\|^2\|z\|^2-2i\|a\|^{2}\mathrm{Im}\langle z,a \rangle}\right]^{n+1}, & a \neq 0 \\
            1, & a = 0
        \end{cases}
        .
    \end{align*}
\end{proof}
\begin{theorem}
    The real and imaginary parts of the Berezin transform of the composition operator $C_{\varphi_{a,-I}}$ are of the form,
    \begin{align*}
        \mathrm{Re}( \widetilde{C}_{\varphi_{a,-I}}(z)) &= \eta \left[
        \sum_{\substack{k=0 \\ \mathrm{k \ even}}}^{n+1} {n + 1 \choose k} \alpha^{n+1-k} (-1)^{\frac{k}{2}} \beta^{k}\right] \\
        \mathrm{Im}( \widetilde{C}_{\varphi_{a,-I}}(z)) &= \eta \left[
        \sum_{\substack{k=0 \\ \mathrm{k \ odd}}}^{n+1} {n + 1 \choose k} \alpha^{n+1-k} (-1)^{\frac{k-1}{2}} \beta^{k}\right],     
    \end{align*}
     where
    \[\eta=\left[\frac{\|a\|^{2}(1 - \|z\|^{2})}{\big{|} \lambda - 2i\|a\|^{2}\mathrm{Im}\langle z,a \rangle\big{|}^2}\right]^{n+1},\]
    
    \[\alpha=\lambda(1 - \mathrm{Re}\langle z,a\rangle)  + 2\|a\|^{2}(\mathrm{Im}\langle z,a \rangle)^2,\]
    
    \[\beta=\mathrm{Im}\langle z,a \rangle\big{[}2\|a\|^2(1 - \mathrm{Re}\langle z,a\rangle)-\lambda \big{]}, \text{and}\]

    \[\lambda=\|a\|^{2} - (1-s_a)|\langle z,a \rangle|^2 - s_{a}\|a\|^2\|z\|^2.\]
\end{theorem}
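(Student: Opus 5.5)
The plan is to start from the closed form of $\widetilde{C}_{\varphi_{a,-I}}$ in Theorem \ref{Th: Blaschke type symbol 0 to -a} (case $a\neq 0$), make the base of the $(n+1)$-st power into a positive real factor times a single complex number $\alpha+i\beta$, and then expand with the binomial theorem, as in the proof of Theorem \ref{RealImaginary}.

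First I record two facts. The quantity $\lambda=\|a\|^{2}-(1-s_a)|\langle z,a\rangle|^2-s_a\|a\|^2\|z\|^2$ is real, since it is built from norms and moduli. The denominator $\lambda-2i\|a\|^2\mathrm{Im}\langle z,a\rangle$ is nonzero on $\B_n$. This holds because, up to the factor $\|a\|^2(1-\langle z,a\rangle)$, it equals $1-\langle\varphi_{a,-I}(z),z\rangle$, and that quantity does not vanish since $|\langle\varphi_{a,-I}(z),z\rangle|<1$.

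Next, write $\langle z,a\rangle=R+iI$ with $R=\mathrm{Re}\langle z,a\rangle$ and $I=\mathrm{Im}\langle z,a\rangle$. Multiply the numerator and denominator of the bracket by the conjugate $\lambda+2i\|a\|^2 I$. The denominator becomes
\[|\lambda-2i\|a\|^2 I|^2 ,\]
which is positive. The complex part of the numerator is
\[(1-R-iI)(\lambda+2i\|a\|^2I).\]
Expanding this product gives real part $\lambda(1-R)+2\|a\|^2I^2=\alpha$ and imaginary part $I\bigl[2\|a\|^2(1-R)-\lambda\bigr]=\beta$. Hence
\[\widetilde{C}_{\varphi_{a,-I}}(z)=\eta\,(\alpha+i\beta)^{n+1},\]
where $\eta$ is exactly the real, nonnegative quantity given in the statement.

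Finally, expand $(\alpha+i\beta)^{n+1}=\sum_{k}\binom{n+1}{k}\alpha^{n+1-k}i^k\beta^k$. Since $\alpha$, $\beta$ and $\eta$ are real, the real and imaginary parts separate by the parity of $k$:
\[ i^k=(-1)^{k/2} \quad\text{for } k \text{ even}, \qquad i^k=i(-1)^{(k-1)/2} \quad\text{for } k \text{ odd}. \]
Collecting the even terms gives $\mathrm{Re}$ and the odd terms gives $\mathrm{Im}$, which are exactly the stated formulas.

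The only real bookkeeping hazard is the expansion of $(1-R-iI)(\lambda+2i\|a\|^2I)$, in particular the sign in $\beta$. I would double-check it by setting $I=0$, where $\beta$ must vanish. If the case $a=0$ is intended to be covered, it is trivial, since the transform is identically $1$.
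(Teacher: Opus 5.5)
Your proposal is correct and follows essentially the same route as the paper. You multiply the base of the closed form for $\widetilde{C}_{\varphi_{a,-I}}$ by the conjugate $\lambda+2i\|a\|^2\mathrm{Im}\langle z,a\rangle$, identify the resulting complex factor as $\alpha+i\beta$, and split $\eta(\alpha+i\beta)^{n+1}$ into real and imaginary parts by the parity of $k$ in the binomial expansion. Your check that the denominator never vanishes (via $|\langle\varphi_{a,-I}(z),z\rangle|<1$) and your placement of $\eta$ outside the $(n+1)$-st power are slightly more careful than the paper's write-up; the paper's intermediate display puts $\eta$ inside the power.
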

\begin{proof}
We have
     \begin{align*}
        \widetilde{C}_{\varphi_{a,-I}}(z) &= 
        \begin{cases}
            \left[\frac{(1 - \|z\|^{2})\|a\|^{2}(1 - \langle z,a \rangle)}{\|a\|^{2} - (1-s_a)|\langle z,a \rangle|^2 - s_{a}\|a\|^2\|z\|^2-2i\|a\|^{2}\mathrm{Im}\langle z,a \rangle}\right]^{n+1}, & a \neq 0 \\
            1, & a = 0
        \end{cases} \\
        &= 
         \begin{cases}
            \left[\frac{\left[(1 - \|z\|^{2})\|a\|^{2}(1 - \langle z,a \rangle)\right]\left[\|a\|^{2} - (1-s_a)|\langle z,a \rangle|^2 - s_{a}\|a\|^2\|z\|^2+2i\|a\|^{2}\mathrm{Im}\langle z,a \rangle \right]}{\big{|}\|a\|^{2} - (1-s_a)|\langle z,a \rangle|^2 - s_{a}\|a\|^2\|z\|^2-2i\|a\|^{2}\mathrm{Im}\langle z,a \rangle\big{|}^2}\right]^{n+1}, & a \neq 0 \\
            1, & a = 0
        \end{cases} \\
        &= 
         \begin{cases}
          \left\{\eta\big{[}(1 - \mathrm{Re}\langle z,a\rangle) - i\mathrm{Im}\langle z,a\rangle \big{]}\big{[}\lambda+2i\|a\|^{2}\mathrm{Im}\langle z,a \rangle \big{]}\right\}^{n+1}, & a \neq 0 \\
            1, & a = 0
        \end{cases} \\
        &= 
         \begin{cases}
          \eta\left\{\big{[}\lambda(1 - \mathrm{Re}\langle z,a\rangle)  + 2\|a\|^{2}(\mathrm{Im}\langle z,a \rangle)^2\big{]} + i\mathrm{Im}\langle z,a \rangle\big{[}2\|a\|^2(1 - \mathrm{Re}\langle z,a\rangle)-\lambda \big{]} \right\}^{n+1}, & a \neq 0 \\
            1, & a = 0
        \end{cases}
    \end{align*}
    where      
    \[\eta=\left[\frac{\|a\|^{2}(1 - \|z\|^{2})}{\big{|} \lambda-2i\|a\|^{2}\mathrm{Im}\langle z,a \rangle\big{|}^2}\right]^{n+1},\text{ and }
    \lambda=\|a\|^{2} - (1-s_a)|\langle z,a \rangle|^2 - s_{a}\|a\|^2\|z\|^2.\]
Using binomial theorem, we get the desired result.
\end{proof}\begin{proposition}\label{propconju}
The Berezin range of $C_{\varphi_{a,-I}}$ on $A^2(\B_n)$ is closed under complex conjugation.
\end{proposition}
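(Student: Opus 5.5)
The plan is to follow the proof of Proposition \ref{propconj1} essentially verbatim, using the closed form of Theorem \ref{Th: Blaschke type symbol 0 to -a} in place of Theorem \ref{Th: Blaschke type symbol}. If $a=0$ the Berezin range is $\{1\}$ and there is nothing to prove, so assume $a\neq 0$. The key observation is that $\widetilde{C}_{\varphi_{a,-I}}(z)$ depends on $z$ only through the three quantities $\|z\|^2$, $|\langle z,a\rangle|^2$ and $\langle z,a\rangle$; the last enters both via the factor $1-\langle z,a\rangle$ and via $\mathrm{Im}\langle z,a\rangle$. Everything else in the formula ($\|a\|^2$, $s_a$) is real.

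Given $z=(r_1e^{i\theta_1},\dots,r_ne^{i\theta_n})$ and $a=(\rho_1e^{i\psi_1},\dots,\rho_ne^{i\psi_n})$, I will take the same candidate
\[w=(r_1e^{i(2\psi_1-\theta_1)},\dots,r_ne^{i(2\psi_n-\theta_n)})\in\B_n.\]
Exactly as in the proof of Proposition \ref{propconj1}, this choice gives $\|w\|=\|z\|$ and $\langle w,a\rangle=\overline{\langle z,a\rangle}$. Consequently $|\langle w,a\rangle|=|\langle z,a\rangle|$, $\mathrm{Re}\langle w,a\rangle=\mathrm{Re}\langle z,a\rangle$ and $\mathrm{Im}\langle w,a\rangle=-\mathrm{Im}\langle z,a\rangle$.

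Next I conjugate the formula for $\widetilde{C}_{\varphi_{a,-I}}(w)$ from Theorem \ref{Th: Blaschke type symbol 0 to -a}. Since $n+1$ is an integer, conjugation passes through the power. In the numerator, $(1-\|w\|^2)\|a\|^2$ is real, and $\overline{1-\langle w,a\rangle}=1-\langle z,a\rangle$. In the denominator, the real part $\|a\|^2-(1-s_a)|\langle w,a\rangle|^2-s_a\|a\|^2\|w\|^2$ equals the corresponding expression in $z$. The imaginary term $-2i\|a\|^2\mathrm{Im}\langle w,a\rangle$ becomes, after conjugation, $+2i\|a\|^2\mathrm{Im}\langle w,a\rangle=-2i\|a\|^2\mathrm{Im}\langle z,a\rangle$. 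Hence
\[\overline{\widetilde{C}_{\varphi_{a,-I}}(w)}=\widetilde{C}_{\varphi_{a,-I}}(z),\]
so every point of $B(C_{\varphi_{a,-I}})$ has its conjugate in the range, which is the claim.

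The only point needing care is the sign bookkeeping in the imaginary part of the denominator. The two sign flips, one from conjugation and one from $\mathrm{Im}\langle w,a\rangle=-\mathrm{Im}\langle z,a\rangle$, must cancel, and they do. No other obstacle is expected. One could equivalently check invariance of $\eta$, $\alpha$, $\beta\mapsto -\beta$ in the real/imaginary-part theorem, but the closed form is cleaner.
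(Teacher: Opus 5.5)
Your proposal is correct and is essentially the paper's own argument: the same reflected point $w=(r_1e^{i(2\psi_1-\theta_1)},\dots,r_ne^{i(2\psi_n-\theta_n)})$ with $\|w\|=\|z\|$ and $\langle w,a\rangle=\overline{\langle z,a\rangle}$, followed by conjugating the closed form for $\widetilde{C}_{\varphi_{a,-I}}$. Your sign bookkeeping for the denominator is correct and cleaner than the paper's. Conjugation turns $-2i\|a\|^2\mathrm{Im}\langle w,a\rangle$ into $+2i\|a\|^2\mathrm{Im}\langle w,a\rangle=-2i\|a\|^2\mathrm{Im}\langle z,a\rangle$, whereas the paper's intermediate line has compensating sign slips.
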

\begin{proof}
Let $z=(r_1e^{i\theta_1},\dots,r_ne^{i\theta_n})$ and nonzero $a=(\rho_1e^{i\psi_1},\dots,\rho_ne^{i\psi_n})$. We want to show that there exists $w \in \B_n$ such that 
\[\widetilde{C}_{\varphi_{a,-I}}(z)= \overline{\widetilde{C}_{\varphi_{a,-I}}(w)}.\]
Our candidate is $w=(r_1e^{i(2\psi_1-\theta_1)},\dots,r_ne^{i(2\psi_n-\theta_n)}) \in \B_n$. We observe
\begin{align*}
\langle w,a \rangle =& \sum_{k=1}^n\left(r_ke^{i(2\psi_k-\theta_k)}\right)\left(\rho_ke^{-i\psi_k} \right) \\
=& \sum_{k=1}^n \left(\rho_ke^{i\psi_k} \right)\left(r_ke^{-i\theta_k} \right) \\
=& \langle a,z \rangle \\
=& \overline{\langle z,a \rangle}.
\end{align*}
Furthermore, $\overline{\langle w,a \rangle}=\langle z,a \rangle$ and $\mathrm{Im} \overline{\langle w,a \rangle}= -\mathrm{Im} \langle z,a \rangle $. Thus, we have
\begin{align*}
\overline{\widetilde{C}_{\varphi_{a,-I}}(w)} =& \overline{  \left[\frac{(1 - \|w\|^{2})\|a\|^{2}(1 - \langle w,a \rangle)}{\|a\|^{2} - (1-s_a)|\langle w,a \rangle|^2 - s_{a}\|a\|^2\|w\|^2-2i\|a\|^{2}\mathrm{Im}\langle w,a \rangle}\right]^{n+1}} \\
=&   \left[\frac{(1 - \|w\|^{2})\|a\|^{2}(1 - \overline{\langle w,a \rangle})}{\|a\|^{2} - (1-s_a)|\langle w,a \rangle|^2 - s_{a}\|a\|^2\|z\|^2+2i\|a\|^{2}\mathrm{Im}\overline{\langle w,a \rangle}}\right]^{n+1} \\
=&   \left[\frac{(1 - \|z\|^{2})\|a\|^{2}(1 - \langle z,a \rangle)}{\|a\|^{2} - (1-s_a)|\langle z,a \rangle|^2 - s_{a}\|a\|^2\|z\|^2 - 2i\|a\|^{2}\mathrm{Im}\langle z,a \rangle}\right]^{n+1} \\
=& \widetilde{C}_{\varphi_{a,-I}}(z).
\end{align*}
\end{proof}
\begin{proposition}\label{propunitary2}
    Let $U$ be a unitary automorphism of $\mathbb{B}_n$.  Then the Berezin range
    $B(C_{\varphi_{Ua,-I}})$ on $A^2(\mathbb{B}_n)$ is convex if and only if the Berezin range $B(C_{\varphi_{a,-I}})$ on $A^2(\mathbb{B}_n)$ is convex.
\end{proposition}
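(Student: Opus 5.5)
I will follow the proof of Proposition \ref{propunitary} essentially verbatim, now using the closed form for $\widetilde{C}_{\varphi_{a,-I}}$ from Theorem \ref{Th: Blaschke type symbol 0 to -a}. The goal is the identity
\[\widetilde{C}_{\varphi_{Ua,-I}}(z)=\widetilde{C}_{\varphi_{a,-I}}(U^*z)\qquad (z\in\mathbb{B}_n),\]
which shows that the two Berezin ranges coincide as sets. Equal sets are, in particular, simultaneously convex or nonconvex.

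First I dispose of the trivial case $a=0$. Then $Ua=0$, and both transforms are identically $1$, so both ranges equal $\{1\}$, which is convex.

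For $a\neq 0$, I note that $Ua\neq 0$, $\|Ua\|=\|a\|$ and hence $s_{Ua}=s_a$. Every occurrence of $z$ and $Ua$ in the formula of Theorem \ref{Th: Blaschke type symbol 0 to -a} is through $\|z\|$, $\|Ua\|$ or $\langle z,Ua\rangle$. These satisfy
\[\langle z,Ua\rangle=\langle U^*z,a\rangle,\qquad \|z\|=\|U^*z\|.\]
Consequently $\mathrm{Im}\langle z,Ua\rangle=\mathrm{Im}\langle U^*z,a\rangle$ and $|\langle z,Ua\rangle|^2=|\langle U^*z,a\rangle|^2$. Substituting these term by term into the numerator $(1-\|z\|^2)\|Ua\|^2(1-\langle z,Ua\rangle)$ and into the denominator
\[\|Ua\|^2-(1-s_{Ua})|\langle z,Ua\rangle|^2-s_{Ua}\|Ua\|^2\|z\|^2-2i\|Ua\|^2\mathrm{Im}\langle z,Ua\rangle\]
gives exactly the expression for $\widetilde{C}_{\varphi_{a,-I}}(U^*z)$.

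Finally, $U^*$ is a unitary map, hence a bijection of $\mathbb{B}_n$ onto itself. Therefore
\[B(C_{\varphi_{Ua,-I}})=\widetilde{C}_{\varphi_{a,-I}}(U^*(\mathbb{B}_n))=\widetilde{C}_{\varphi_{a,-I}}(\mathbb{B}_n)=B(C_{\varphi_{a,-I}}).\]
There is no real obstacle. The only point needing care is checking that the formula depends on $z$ and $a$ solely through unitarily invariant quantities, including the imaginary part of the inner product, and that the parameter $s$ is unchanged under $a\mapsto Ua$.
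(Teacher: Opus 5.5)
Your proposal is correct and is essentially the paper's argument. The paper only says the proof follows that of Proposition \ref{propunitary}, which is exactly what you carry out: you use the closed form in Theorem \ref{Th: Blaschke type symbol 0 to -a} to obtain $\widetilde{C}_{\varphi_{Ua,-I}}(z)=\widetilde{C}_{\varphi_{a,-I}}(U^*z)$, then use surjectivity of $U^*$ on $\mathbb{B}_n$, and you also check the invariance of $s_a$ and $\mathrm{Im}\langle z,a\rangle$, which the paper leaves implicit.
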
 \noindent
The proof of the above proposition follows the proof of Proposition \ref{propunitary}.

Let us prove our main result.
\begin{proof}[Proof of Theorem \ref{Zerotonegative}]
If $a = 0$, then $\widetilde{C}_{\varphi_{a,-I}}(z) = 1$ for $z \in \B_{n}$. This implies $B(C_{\varphi_{a,-I}}) = \{1\}$ which is convex. 

Now suppose $B(C_{\varphi_{a,-I}})$ on $A^{2}(\B_{n})$ is convex. By Proposition \ref{propunitary2}, convexity of $B(C_{\varphi_{a,-I}})$ is equivalent to convexity of $B(C_{\varphi_{Ua,-I}})$ for any unitary automorphism $U$.  Now it is well known that the group of unitary automorphisms acts transitively on spheres.  Therefore we may assume that $a$ is real valued since we can construct unitary $U$ to that $Ua$ is real valued.  

We let $h(c)=ic\frac{a}{\|a\|}$ for $c\in [0,1]$. The idea is to compute the real part of $\widetilde{C}_{{\varphi}_{a,-I}}(h(c))$ to show that there exists $c_0\in (0,1)$ so that
$\mathrm{Re}(\widetilde{C}_{{\varphi}_{a,-I}}(h(c_0)))=0$.Assuming convexity of the Berezin range, this will produce a contradiction.

We assume that $a\neq 0$ is real valued and the Berezin range is convex.  Using Theorem \ref{Th: Blaschke type symbol 0 to -a}, one can write
\begin{equation*} 
\widetilde{C}_{{\varphi}_{a,-I}}(h(c))=\left(\frac{1}{D(c)}(1-c^2)\|a\|^2\left(\left(\|a\|^2(1-c^2)+2\|a\|^4c^2\right)+i\left(2\|a\|^3c+\|a\|^3c (c^2-1)\right)\right)\right)^{n+1}
\end{equation*}
where $D(c)\geq M>0$ for all $c\in [-1,1]$. Now we compute the argument of $\widetilde{C}_{{\varphi}_{a,-I}}(h(c))$.
We have
\[\theta(c)=\mathrm{Arg}\left(\widetilde{C}_{{\varphi}_{a,-I}}(h(c))\right)=(n+1)\arctan\left(\frac{\|a\|c(1+c^2)}{2\|a\|^2c^2+(1-c^2)}\right).\]
Since $\theta$ is continuous on $[0,1]$, $\theta(0)=0$, and $\theta(1)>\frac{\pi}{2}$, there exists $c_0\in (0,1)$ so that
$\theta(c_0)=\frac{\pi}{2}$ by the intermediate value theorem.  That is,
$\mathrm{Re}(\widetilde{C}_{{\varphi}_{a,-I}}(h(c_0)))=0$.  Hence
$\widetilde{C}_{{\varphi}_{a,-I}}(h(c_0))\neq 0$ lies on the imaginary axis.  Hence by Proposition \ref{propconju}, 
$-\widetilde{C}_{{\varphi}_{a,-I}}(h(c_0))$ is also in the Berezin range.  Now assuming convexity of the Berezin range, we have $0$ is in the Berezin range.  This is a contradiction since if
$\widetilde{C}_{{\varphi}_{a,-I}}(z_0)=0$ then $\|z_0\|=1$.
Hence $a=0$.
\end{proof}

\subsection*{Acknowledgment}
We wish to thank S\"onmez {\c{S}}ahuto\u{g}lu for helpful comments on a preliminary version of this manuscript. 
\bibliographystyle{amsalpha}
\bibliography{BibFile}

\end{document}